\newtheorem{theorem}{Theorem}[section]
\newtheorem{lemma}[theorem]{Lemma}
\newtheorem{proposition}[theorem]{Proposition}
\newtheorem{corollary}[theorem]{Corollary}
\newtheorem{fact}[theorem]{Fact}
\newtheorem{question}[theorem]{Question}
\newtheorem*{mainthm}{Main Theorem}
\newtheorem*{maincor}{Main Corollary}
\theoremstyle{definition}
\newtheorem{definition}[theorem]{Definition}
\newtheorem{remark}[theorem]{Remark}
\newcommand{\cf}{\mathrm{cf}}
\newcommand{\dom}{\mathrm{dom}}
\newcommand{\bb}{\mathbb}
\newcommand{\otp}{\mathrm{otp}}
\newcommand{\zfc}{\mathsf{ZFC}}
\newcommand{\mc}{\mathcal}
\newcommand{\mf}{\mathfrak}
\newcommand{\On}{\mathrm{On}}
\newcommand{\squig}{\rightsquigarrow}
\title{A Galvin-Hajnal theorem for generalized cardinal characteristics}
\author{Chris Lambie-Hanson}
\address{Institute of Mathematics of the Czech Academy of Sciences \\ 
\v{Z}itn\'{a} 25, Praha 1, Czechia}
\email{lambiehanson@math.cas.cz}
\urladdr{http://math.cas.cz/lambiehanson}
\keywords{singular cardinals, cardinal characteristics, cardinal arithmetic, 
Silver's theorem, Galvin-Hajnal theorem}
\subjclass[2010]{03E04, 03E05, 03E17}
\thanks{The results of this paper were presented at the Bar-Ilan University Set 
Theory Seminar in June 2022. We thank the organizers for the invitation and the 
seminar's participants for a number of insightful questions and comments. 
We thank Moti Gitik for valuable conversations regarding Theorem 
\ref{silver_a_tree_thm}. Finally, we thank the anonymous referee for their 
suggestions and corrections.}
\begin{document}

\begin{abstract}
  We prove that a variety of generalized cardinal characteristics, including 
  meeting numbers, the reaping number, and the dominating number, satisfy an 
  analogue of the Galvin-Hajnal theorem, and hence also of Silver's theorem, at 
  singular cardinals of uncountable cofinality.
\end{abstract}

\maketitle

\section{Introduction}

One of the seminal results in cardinal arithmetic, and one of the first indications that 
there are nontrivial $\zfc$ constraints on the behavior of the continuum function at singular 
cardinals, is \emph{Silver's theorem}.

\begin{theorem} [Silver \cite{silver}]
  Suppose that $\kappa$ is a singular cardinal of uncountable cofinality, 
  $\eta < \cf(\kappa)$ is an ordinal, and the set of cardinals
  \[
    \{\mu < \kappa \mid 2^\mu \leq \mu^{+\eta}\}
  \]
  is stationary in $\kappa$. Then $2^\kappa \leq \kappa^{+\eta}$.
\end{theorem}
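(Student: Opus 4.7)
Write $\lambda = \cf(\kappa)$ and fix a strictly increasing continuous sequence $\langle \kappa_i : i < \lambda \rangle$ of cardinals cofinal in $\kappa$, with $\kappa_0 > \lambda$. By intersecting with a suitable club, I may assume that $S := \{i < \lambda : 2^{\kappa_i} \leq \kappa_i^{+\eta}\}$ is itself stationary in $\lambda$. For each $i \in S$ choose an enumeration $\langle A^i_\xi : \xi < \kappa_i^{+\eta} \rangle$ of $\mc{P}(\kappa_i)$, and to each $X \subseteq \kappa$ associate the function $\phi_X : S \to \ord$ sending $i$ to the unique $\xi$ with $A^i_\xi = X \cap \kappa_i$. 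If $X \neq Y$ then $X \cap \kappa_i \neq Y \cap \kappa_i$ on a tail of $i$, so $X \mapsto \phi_X$ is one-to-one even modulo the bounded ideal; counting the $\phi_X$ modulo $\mathrm{NS}_\lambda$ therefore still counts all of $\mc{P}(\kappa)$.

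Next, I would invoke the Galvin--Hajnal rank on $\ord^\lambda$ modulo $\mathrm{NS}_\lambda$. Declare $f <^* g$ whenever $\{i < \lambda : f(i) \geq g(i)\}$ is nonstationary; this relation is well-founded on $\ord^\lambda$ (by Fodor's lemma, which is precisely where the uncountable cofinality of $\lambda$ enters), and I write $\|f\|$ for its rank. A straightforward induction shows that the constant function $c_\eta \equiv \eta$ has rank exactly $\eta$. The technical core of the proof is then the following claim, established by induction on $\|h\|$: for any $h : \lambda \to \ord$, any family of pairwise $\mathrm{NS}_\lambda$-inequivalent functions $g$ with $g(i) < \kappa_i^{+h(i)}$ for every $i$ in some stationary set has cardinality at most $\kappa^{+\|h\|}$. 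Applying this with $h \equiv \eta$ and the family $\{\phi_X : X \subseteq \kappa\}$ yields $|\mc{P}(\kappa)| \leq \kappa^{+\eta}$.

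The main obstacle lies in the inductive step of the Galvin--Hajnal claim. The successor case proceeds by an almost-disjoint-transversal-style argument: if a family of size greater than $\kappa^{+\|h\|}$ existed, one could thin $h$ on a stationary set to a function of strictly smaller rank while still bounding an equally large family, contradicting the inductive hypothesis. The limit case is more delicate and requires Fodor's lemma together with a pressing-down argument on the successor values of $h$, again fundamentally using $\cf(\lambda) = \lambda > \omega$. Tracking how the stationary set shrinks through successive applications of the inductive hypothesis, and verifying that the bounds still close up at the end, is where the real care is needed.
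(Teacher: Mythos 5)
The paper does not actually prove Silver's theorem; it is stated as Theorem~1.1 and cited to Silver, with the remark that it is the $\varphi \equiv \eta$ special case of the Galvin--Hajnal theorem (Theorem~1.2), using the fact that the constant function with value $\eta < \cf(\kappa)$ has Galvin--Hajnal rank $\eta$. Your proposal takes exactly that route: you encode subsets of $\kappa$ by almost-disjoint functions $\phi_X$ bounded by $\kappa_i^{+\eta}$ on a stationary set, observe that $c_\eta$ has rank $\eta$, and reduce to the rank-bounded counting claim. That reduction is correctly set up, and the pieces you do prove (the transfer from a stationary set of cardinals in $\kappa$ to a stationary $S \subseteq \cf(\kappa)$, the injectivity of $X \mapsto \phi_X$ modulo bounded sets, the rank computation for $c_\eta$) are all sound.

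The genuine gap is that your ``key claim'' --- every almost-disjoint family of functions with $g(i) < \kappa_i^{+h(i)}$ on a stationary set has size at most $\kappa^{+\|h\|}$ --- is precisely the Galvin--Hajnal lemma, and you do not prove it. You describe only its outline (``induction on $\|h\|$'', ``an almost-disjoint-transversal-style argument'', ``pressing down on successor values''), and you explicitly concede that ``verifying that the bounds still close up at the end is where the real care is needed.'' That induction is the entire mathematical content of the theorem; absent it, the proposal reduces Silver's theorem to an unproven lemma of strictly greater strength. Moreover, the brief gloss you give of the inductive step is not quite an accurate description of how the argument actually runs: the Galvin--Hajnal induction does not cleanly split into a ``successor case'' where one ``thins $h$'' and a ``limit case'' with pressing down; rather, for each fixed $g$ in the family one partitions the remaining functions according to where they lie below $g$ (a stationary set), reads off a function of strictly smaller rank from each partition class, and only then counts classes. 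So as written the description would not guide a reader to a correct completion. You should either carry out the rank induction in full (it is a page or so of careful work, including a non-trivial base case $\|h\| = 0$) or simply cite the Galvin--Hajnal lemma as an external input, as the paper itself effectively does.
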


Silver's original proof of this theorem involves a generic ultrapower argument; a purely 
combinatorial argument for the theorem was soon provided by Baumgartner and Prikry 
\cite{baumgartner_prikry}. Around the same time, a generalization of Silver's theorem 
was proven by Galvin and Hajnal. The following statement of (a corollary of) their 
theorem involves the notion of the \emph{Galvin-Hajnal rank} $\|\varphi\|_S$ of a function 
$\varphi$; see Definition \ref{galvin_hajnal_def} below for its formal definition.

\begin{theorem} [Galvin-Hajnal \cite{galvin_hajnal}] \label{galvin_hajnal_thm}
  Suppose that $\kappa$ is a singular cardinal of uncountable cofinality, 
  $\langle \kappa_i \mid i < \cf(\kappa) \rangle$ is an increasing, continuous 
  sequence of cardinals converging to $\kappa$, $S \subseteq \cf(\kappa)$ is 
  stationary, and $\varphi:S \rightarrow \On$ is a function such that, for all 
  $i \in S$, we have $2^{\kappa_i} \leq \kappa_i^{+\varphi(i)}$. Then 
  $2^\kappa \leq \kappa^{+\|\varphi\|_S}$.
\end{theorem}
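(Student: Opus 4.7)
The plan is to prove Theorem \ref{galvin_hajnal_thm} by reducing the bound on $2^\kappa$ to a size bound on almost disjoint families of ordinal-valued functions on $S$, and then proceeding by transfinite induction on the Galvin-Hajnal rank $\|\varphi\|$. Since $\kappa = \sup_{i \in S} \kappa_i$, every $A \subseteq \kappa$ is determined by the sequence $\langle A \cap \kappa_i : i \in S \rangle$, and the hypothesis $2^{\kappa_i} \leq \kappa_i^{+\varphi(i)}$ supplies, for each $i \in S$, an injection $e_i \colon \mc{P}(\kappa_i) \hookrightarrow \kappa_i^{+\varphi(i)}$. The map $A \mapsto f_A$ given by $f_A(i) = e_i(A \cap \kappa_i)$ then embeds $\mc{P}(\kappa)$ into $\prod_{i \in S} \kappa_i^{+\varphi(i)}$ in an almost disjoint fashion: for $A \neq B$, choosing $\alpha \in A \triangle B$, one has $f_A(i) \neq f_B(i)$ whenever $\kappa_i > \alpha$, hence on a tail of $S$. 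The theorem thus reduces to the key lemma asserting that any family $F \subseteq \prod_{i \in S} \kappa_i^{+\varphi(i)}$ with $\{i \in S : f(i) = g(i)\}$ nonstationary for all distinct $f, g \in F$ satisfies $|F| \leq \kappa^{+\|\varphi\|}$.

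I would prove the key lemma by induction on $\|\varphi\|$. The base case $\|\varphi\| = 1$ forces $\varphi(i) = 1$ on a stationary $T \subseteq S$ and so reduces to the classical Silver theorem $2^\kappa \leq \kappa^+$, which can be established by the Baumgartner-Prikry enumeration applied to a putative family of $\kappa^{++}$ subsets of $\kappa$. For the inductive step, given $\varphi$ with $\|\varphi\| > 1$, suppose toward a contradiction that $|F| \geq \kappa^{+\|\varphi\|+1}$. The strategy is, for each $f \in F$, to extract an auxiliary ordinal function $\psi_f$ on $S$ recording pointwise a ``one-step-down'' location of $f(i)$ inside $\kappa_i^{+\varphi(i)}$, in such a way that $\psi_f$ lies strictly below $\varphi$ in the Galvin-Hajnal order (so $\|\psi_f\| < \|\varphi\|$). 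A pigeonhole argument then produces a single $\psi <^* \varphi$ and a subfamily $F' \subseteq F$ of size greater than $\kappa^{+\|\psi\|}$, all of whose members lie below $\kappa_i^{+\psi(i)}$ on a common stationary $S^* \subseteq S$. The inductive hypothesis applied to $F'$ on $S^*$ yields $|F'| \leq \kappa^{+\|\psi\|}$, the desired contradiction.

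The main obstacle is uniformly constructing the assignment $f \mapsto \psi_f$ and extracting from it a common $\psi$. When $\varphi(i)$ is a limit ordinal, $\kappa_i^{+\varphi(i)}$ is singular of cofinality $\cf(\varphi(i))$ and a natural $\psi_f(i) < \varphi(i)$ can be read off from the least level at which $f(i)$ appears; when $\varphi(i)$ is a successor, however, $\kappa_i^{+\varphi(i)}$ is regular and admits no evident regressive reduction, so a more delicate enumeration-based pigeonhole must be performed that ``spends'' a unit of rank at each successor step and must be coherently coordinated across $i \in S$. Handling these limit and successor cases pointwise in $i$, separately handling the limit case in $\|\varphi\|$ itself (where one passes to a cofinal sequence of smaller ranks and diagonalizes), and verifying that $F'$ remains almost disjoint on $S^*$ form the technical core of the argument; the well-foundedness of $<^*$, which underpins the induction and in turn rests on Fodor's lemma on $\cf(\kappa)$, is where the uncountable-cofinality hypothesis is genuinely used.
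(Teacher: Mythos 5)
First, a point of orientation: the paper states Theorem \ref{galvin_hajnal_thm} as a classical result of Galvin and Hajnal and cites \cite{galvin_hajnal} for it; it never proves it, so there is no proof in the paper against which to compare your sketch. (The closest the paper comes is Theorem \ref{jech_thm_2}, a PCF-style reformulation due to Shelah and Jech, which is also only quoted.) Judged on its own merits, your outline correctly identifies the standard route: encode $\mathcal P(\kappa)$ as an almost disjoint (mod $\mathrm{NS}_\theta\restriction S$) family in $\prod_{i\in S}\kappa_i^{+\varphi(i)}$ via injections $e_i$, reduce to bounding the size of such a family, and induct on $\|\varphi\|_S$. The base case is also fine: if $\|\varphi\|_S = 1$ then both $\{i:\varphi(i)=0\}$ and $\{i:\varphi(i)\ge 2\}$ must be nonstationary relative to $S$ (otherwise the rank would be $0$ or $\ge 2$, respectively), so $\varphi=1$ on a stationary subset of $S$ and Silver's theorem with $\eta=1$ applies.

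The inductive step, however, is only a plan, and it has two genuine gaps. First, the pigeonhole over the auxiliary functions $\psi_f <_S \varphi$ does not go through as stated: there may be as many as $2^\theta$ distinct candidate functions $\psi$, and nothing in the hypotheses bounds $2^\theta$ by $\kappa^{+\|\varphi\|}$, so you cannot conclude that some single $\psi$ is shared by more than $\kappa^{+\|\psi\|}$ members of $F$. The classical Galvin--Hajnal argument avoids this by a more intrinsic counting carried out on $F$ itself, using normality of $\mathrm{NS}_\theta$, rather than by pigeonholing over a set of auxiliary functions. Second, once you pass to a stationary $S^*\subsetneq S$, the Galvin--Hajnal rank can only \emph{increase}: $\|\psi\restriction S^*\|_{S^*}\ge \|\psi\|_S$. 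So ``applying the inductive hypothesis to $F'$ on $S^*$'' does not immediately yield $|F'|\le\kappa^{+\|\psi\|_S}$; you must separately ensure that $\|\psi\restriction S^*\|_{S^*} < \|\varphi\|_S$, which is not automatic and is precisely why the classical proof is careful not to shrink the underlying stationary set freely. You do flag the successor/limit dichotomy and the handling of limit $\|\varphi\|$ as ``the technical core,'' which is accurate, but as written the plan would stall at exactly those points.
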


This theorem does indeed generalize Silver's theorem, since, as we shall see, 
given any stationary subset $S$ of a regular uncountable cardinal $\theta$, 
and given any ordinal $\eta < \theta$, if $\varphi$ is the constant function on 
$S$ taking value $\eta$, then $\|\varphi\|_S = \eta$.

One of the central aspects of research into cardinal arithmetic is the study 
of certain methods of measuring the ``size" of the power set of a cardinal $\kappa$ 
that are in a sense \emph{finer} than simply looking at the value of $2^\kappa$. 
At singular cardinals, these methods come from two primary sources, with 
some overlap between the two:
\begin{itemize}
  \item Shelah's PCF theory;
  \item generalizations of cardinal characteristics of the continuum to singular cardinals.
\end{itemize}
Certain of these methods are known to satisfy versions of Silver's theorem 
or the Galvin-Hajnal theorem. For example, in \cite[\S 2, Claim 2.4]{cardinal_arithmetic}, 
Shelah proves a variation of Theorem \ref{galvin_hajnal_thm}
involving PCF-theoretic pseudopowers $\mathrm{pp}_J(\kappa)$ and $\mathrm{pp}_J(\kappa_i)$ 
in place of the cardinals $2^\kappa$ and $2^{\kappa_i}$; in 
\cite[Lemma 3.8]{rinot_milner_sauer}, Rinot proves a version of Silver's theorem 
for covering numbers; and in \cite{kojman_density}, Kojman proves that certain \emph{density} numbers satisfy an 
analogue of Silver's theorem (see Section \ref{density_section} for details).

In this paper, we prove versions of the Galvin-Hajnal theorem for a variety of 
cardinal characteristics of the continuum generalized to singular cardinals of uncountable 
cofinality, focusing in particular on meeting numbers, the reaping number, and the dominating number. 
Before proceeding to a summary of our results, let us say a few words about our approach to 
cardinal characteristics at singular cardinals in general. There are often multiple 
natural ways to generalize familiar cardinal characteristics of the continuum to singular cardinals. 
For example, when defining the dominating number $\mathfrak{d}_\kappa$ at a singular cardinal $\kappa$, 
any of the following possible definitions of $\mathfrak{d}_\kappa$ seems potentially reasonable (see 
the end of this introduction for any undefined notation):
\begin{itemize}
  \item $\cf({^\kappa}\kappa, <_0)$, where, given $f,g \in {^\kappa}\kappa$, we let 
  $f <_0 g$ if and only if $|\{i < \kappa \mid g(i) \leq f(i)\}| < \kappa$;
  \item $\cf({^\kappa}\kappa, <_1 )$, where, given $f,g \in {^\kappa}\kappa$, we let $f <_1 g$ if 
  and only if $\{i < \kappa \mid g(i) \leq f(i)\}$ is bounded below $\kappa$;
  \item $\cf({^{\cf(\kappa)}}\kappa, <_2)$, where, given $f,g \in {^{\cf(\kappa)}}\kappa$, 
  we let $f <_2 g$ if and only if $|\{i < \cf(\kappa) \mid g(i) \leq f(i)\}| < \cf(\kappa)$.
\end{itemize}
In all such choices that we face here, we opt for the definition that emphasizes the \emph{cardinality} 
of $\kappa$ over its \emph{cofinality}, as, at least in this context, this seems to be what gives 
rise to the most genuinely new behavior at the singular cardinal $\kappa$. So, for instance, we will define 
$\mathfrak{d}_\kappa$ to be what is called $\cf({^\kappa}\kappa, <_0)$ above. (It is not difficult to 
show that what is called $\cf({^{\cf(\kappa)}}\kappa, <_2)$ above is in fact nothing other than 
$\mathfrak{d}_{\cf(\kappa)}$.)

We also note here that in this paper we are only considering cardinal characteristics at a singular 
cardinal $\kappa$ that are provably strictly greater than $\kappa$. In particular, we are not 
considering the bounding number $\mathfrak{b}_\kappa$, the splitting number $\mathfrak{s}_\kappa$, 
or the almost disjointness number $\mathfrak{a}_\kappa$, since, at least when generalized in accordance 
with the principles laid out in the previous paragraph, these cardinal characteristics are provably 
at most $\mathfrak{b}_{\cf(\kappa)}$, $\mathfrak{s}_{\cf(\kappa)}$, and 
$\mathfrak{a}_{\cf(\kappa)}$, respectively (though we will have more to say about the almost disjointness 
number in Section \ref{question_section}).

A slightly suboptimal but succinct summary of our main results can be stated as 
follows (we refer the reader to Section \ref{canonical_section} for the definition of 
\emph{canonical function} and to Section \ref{instance_section} for the precise definition of 
the cardinal characteristics under consideration):

\begin{maincor}
  Suppose that 
  \begin{itemize}
    \item $\kappa$ is a singular cardinal with $\cf(\kappa) = \theta > \omega$;
    \item $\langle \kappa_i \mid i < \theta \rangle$ is an increasing, continuous 
    sequence of cardinals converging to $\kappa$;
    \item $\beta$ is an ordinal for which the canonical function on $\theta$ of 
    rank $\beta$, $\varphi^\theta_\beta$, exists;
    \item $\mu^\theta \leq \kappa^{+\beta}$ for all $\mu < \kappa$;
    \item $S \subseteq \theta$ is stationary;
    \item $\mf{cc}$ is one of the cardinal characteristics $m(\theta, \kappa)$, $d(\theta, \kappa)$,
    $\mf r_\kappa$, or $\mf d_\kappa$, and, for all $i < \theta$, $\mf{cc}_i$ 
    is the corresponding cardinal characteristic $m(\cf(\kappa_i), \kappa_i)$, $d(\cf(\kappa_i), \kappa_i)$,
    $\mf r_{\kappa_i}$, or $\mf d_{\kappa_i}$;
    \item for all $i \in S$, we have $\mf{cc}_i \leq \kappa_i^{+\varphi^\theta_\beta(i)}$.
  \end{itemize}
  Then $\mf{cc} \leq \kappa^{+\beta}$.
\end{maincor}

The slight suboptimality in this statement comes from the assumption that $\mu^\theta 
\leq \kappa^{+\beta}$ for all $\mu < \kappa$. As we will see, a weaker hypothesis, 
in which $\mu^\theta$ is replaced by some cardinal characteristic that is provably 
at most $\mu^\theta$, is sufficient 
for our results; the precise weakening depends on the specific cardinal characteristic 
under consideration and will require some further notation to state, so we leave the 
exact details for the statement of the Main Theorem at the end of Section 
\ref{instance_section}.

The structure of the remainder of the paper is as follows. In Section 
\ref{canonical_section}, we review the definitions and facts regarding canonical functions 
and the Galvin-Hajnal rank that we will need for our results. In Section 
\ref{density_section}, we recall certain notions of \emph{density}. This is important for 
two reasons: first, because the analogue of Silver's theorem for density numbers 
proven in \cite{kojman_density} was direct inspiration for this paper, and 
secondly and more immediately, these density numbers will appear in the precise 
formulations of our results. After this, we begin with the proof of our main theorem. 
The proofs of our various analogues of the Galvin-Hajnal theorem all have the same 
general shape, so in Section \ref{framework_section} we develop an abstract framework 
that will apply to all of our specific instances. In Section \ref{instance_section}, 
we apply this abstract framework to our cardinal characteristics under consideration 
to obtain our Main Theorem, which is precisely stated at the end of the section. 
Finally, in Section \ref{question_section}, we record 
some questions that remain open and sketch a consistent negative answer to the question 
about whether a version of Silver's theorem holds for the existence of Aronszajn trees at double 
successors of singular cardinals.

\subsection{Notation and conventions:} 
Unless otherwise noted, we believe our notation and terminology to be standard. We refer 
the reader to \cite{jech} for any undefined notions or notations from set theory, and we 
refer the reader to \cite{blass} for an introduction to cardinal characteristics of the 
continuum, generalizations of which form the subject of this paper. 

If $X$ is a set and $\squig$ is a binary relation on 
$X$, then $\cf(X, \squig)$ denotes the minimal cardinality of a subset $Y \subseteq X$ such that, 
for all $x \in X$, there is $y \in Y$ for which $x \squig y$. If $\theta$ is a regular uncountable 
cardinal, then $\mathrm{NS}_\theta$ denotes the nonstationary ideal on $\theta$. If $S \subseteq \theta$ 
is a stationary set, then, formally, $\mathrm{NS}_\theta \restriction S$ is the ideal on $\theta$ generated 
by $\mathrm{NS}_\theta \cup \{\theta \setminus S\}$; in practice, we will typically 
think of $\mathrm{NS}_\theta \restriction S$ as the ideal of nonstationary subsets of $S$, considered as 
an ideal on $S$. If $X$ is a set and $\kappa$ is a cardinal, then $[X]^\kappa := \{y \subseteq X \mid 
|y| = \kappa\}$. If $X$ and $Y$ are two sets, then ${^Y}X$ denotes the set of all functions 
with domain $Y$ and codomain $X$.

To facilitate clean statements of hypotheses, we adopt the convention that 0 is not a 
limit ordinal.

\section{Canonical functions and the Galvin-Hajnal rank} \label{canonical_section}

Suppose that $S$ is an infinite set and $I$ is a proper ideal on $S$. As usual, we let 
$I^+$ denote the set of \emph{$I$-positive} subsets of $S$, i.e., $I^+ := 
\mc P(S) \setminus I$. Given two functions $\varphi ,
\psi \in {^S}\On$, we write $\varphi <_I \psi$ to denote the assertion that the set $\{i \in S \mid \psi(i) 
\leq \varphi(i)\}$ is in $I$. Define $=_I$, $\leq_i$, etc.\ in the obvious way. We will be particularly 
interested in the case in which $S$ is a stationary subset of a regular uncountable cardinal $\theta$ and 
$I = \mathrm{NS}_\theta \restriction S$, i.e., $I$ is the collection of nonstationary subsets of $S$. 
In this context, given two functions $\varphi,\psi \in {^S}\On$, we will write $\varphi <_S \psi$ instead 
of $\varphi <_{\mathrm{NS}_\theta \restriction S} \psi$ (and similarly with $\leq_S$, $=_S$, etc.). In particular, for functions 
$\varphi, \psi \in {^\theta}\On$, $\varphi <_\theta \psi$ will denote $\varphi <_{\mathrm{NS}_\theta} 
\psi$. Note that $\varphi <_S
\psi$ if and only if there is a club $C \subseteq \theta$ such that, for all $i \in C \cap S$, we have $
\varphi(i) < \psi(i)$.

Fix for the remainder of this section a regular uncountable cardinal $\theta$.
Given a stationary set $S \subseteq \theta$, the 
corresponding relation $<_S$ is well-founded and therefore has a rank function, which yields 
what is known as the \emph{Galvin-Hajnal rank}.

\begin{definition}[\cite{galvin_hajnal}] \label{galvin_hajnal_def}
	Suppose that $\theta$ is an uncountable regular cardinal and $S \subseteq \theta$ is 
	stationary. The \emph{Galvin-Hajnal rank} of a function $\varphi \in {^S}\On$, denoted 
	$\|\varphi\|_S$, is defined by recursion on $<_S$ by letting 
	\[
		\|\varphi\|_S := \sup\{\|\psi\|_S + 1 \mid \psi \in {^S}\On \text{ and } \psi <_S \varphi\}
	\]
	for all $\varphi \in {^S}\On$.
\end{definition}

It is readily verified by recursion on $\|\varphi\|_S$ that, for all stationary $T \subseteq S \subseteq 
\theta$ and all $\varphi \in {^S}\On$, we have $\|\varphi\|_S \leq \|\varphi \restriction T\|_T$. 
In general, it is quite possible to have strict inequality here. However, if $\varphi$ is what is 
known as a \emph{canonical} function, this inequality is in fact always an equality. With
this in mind, let us now recall the definition of and some basic facts about canonical functions.

By recursion on ordinals $\alpha$, attempt to define the \emph{canonical function on 
$\theta$ of rank $\alpha$}, $\varphi^\theta_\alpha \in {^\theta}\On$, as follows. If $\beta$ is 
an ordinal and $\varphi^\theta_\alpha$ has been defined for all $\alpha < \beta$, then let 
$\varphi^\theta_\beta$ be the least upper bound for $\langle \varphi^\theta_\alpha \mid 
\alpha < \beta \rangle$ with respect to $<_\theta$, if such a least upper bound exists. 
In other words, $\varphi^\theta_\beta \in {^\theta}\On$ is a function such that 
\begin{itemize}
	\item $\varphi^\theta_\beta$ is a $<_\theta$-upper bound for $\langle \varphi^\theta_\alpha \mid \alpha < 
	\beta \rangle$;
	\item if $\psi$ is another $<_\theta$-upper bound for $\langle \varphi^\theta_\alpha \mid \alpha < \beta 
	\rangle$, then $\varphi^\theta_\beta \leq_\theta \psi$.
\end{itemize}
If such a least upper bound does not exist, then $\varphi^\theta_\beta$ is undefined (and therefore 
$\varphi^\theta_\gamma$ is undefined for all $\gamma > \beta$ as well).

Note that $\varphi^\theta_\beta$ is not uniquely determined, but is unique up to $=_\theta$-equivalence. 
We will let $\Phi^\theta_\beta$ denote the set of all canonical functions on $\theta$ of 
rank $\beta$. We will slightly abuse notation and use $\varphi^\theta_\beta$ to denote an 
arbitrary element of $\Phi^\theta_\beta$. We will always be working in contexts that are invariant 
under $=_\theta$-equivalence, so this will not result in any loss of generality. The following well-known 
fact (see \cite[\S 1]{krueger_schimmerling} for an introduction to canonical functions of rank 
less than $\theta^+$) shows that, for all $\beta < \theta^+$, there are canonical functions on $\theta$ of 
rank $\beta$.

\begin{fact}
	Let $\beta < \theta^+$, and let $e:\theta \rightarrow \beta$ be a surjection. Then the 
	function $\varphi \in {^\theta}\theta$ defined by letting $f(i) = \otp(e``i)$ for all $i < \theta$ 
	is in $\Phi^\theta_\beta$. 
\end{fact}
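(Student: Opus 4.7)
The plan is to argue by induction on $\beta < \theta^+$ that, for any surjection $e : \theta \to \beta$, the function $\varphi(i) := \otp(e``i)$ is both a $<_\theta$-upper bound for $\langle \varphi^\theta_\alpha \mid \alpha < \beta \rangle$ and $\leq_\theta$-minimal among such, so that $\varphi \in \Phi^\theta_\beta$. Because canonical functions are only determined up to $=_\theta$-equivalence, I would first prove a well-definedness lemma: if $e_1, e_2 : \theta \to \beta$ are both surjections, then $C := \{i < \theta \mid e_1``i = e_2``i\}$ contains a club in $\theta$. Closure is immediate since images commute with unions of increasing chains. For unboundedness, starting from any $j_0 < \theta$, I would choose $j_0 < j_1 < j_2 < \dots$ alternately so that $e_1``j_n \subseteq e_2``j_{n+1}$ and $e_2``j_n \subseteq e_1``j_{n+1}$, using that each $e_k``j_n$ has size $< \theta$, each $e_k$ is a surjection, and $\theta$ is regular; then $j_\omega := \sup_n j_n$ lies in $C$. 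Consequently $\varphi$ is well-defined modulo $=_\theta$, and I may choose the surjection $e$ freely at each inductive stage.

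For the upper bound property, fix $\alpha < \beta$; the goal is $\varphi^\theta_\alpha <_\theta \varphi$. The case $\alpha = 0$ is immediate since $\varphi(i) \geq 1$ for all $i \geq 1$. For $\alpha \geq 1$, I would define an auxiliary surjection $e_\alpha : \theta \to \alpha$ by $e_\alpha(i) = e(i)$ when $e(i) < \alpha$ and $e_\alpha(i) = 0$ otherwise; this is a surjection onto $\alpha$ because $e$ is one onto $\beta \supseteq \alpha$. By the inductive hypothesis combined with well-definedness, $\varphi^\theta_\alpha$ is represented by $i \mapsto \otp(e_\alpha``i)$. For all $i$ past the least preimages under $e$ of $0$, of $\alpha$, and of some ordinal $\geq \alpha$---a tail, hence a set containing a club---one checks that $e_\alpha``i = e``i \cap \alpha$ and $\alpha \in e``i$, giving $\varphi^\theta_\alpha(i) = \otp(e``i \cap \alpha) < \otp(e``i) = \varphi(i)$.

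For the least upper bound property, suppose $\psi \in {^\theta}\On$ satisfies $\varphi^\theta_\alpha <_\theta \psi$ for every $\alpha < \beta$, and suppose toward contradiction that $S := \{i < \theta \mid \psi(i) < \varphi(i)\}$ is stationary. For each $i \in S$, let $\alpha_i$ be the $\psi(i)$-th element of $e``i$, so $\alpha_i \in e``i$ and $\psi(i) = \otp(e``i \cap \alpha_i)$. The map $f : S \to \theta$ defined by $f(i) := \min\{j < i \mid e(j) = \alpha_i\}$ is well-defined and regressive, so by Fodor's lemma there is a stationary $T \subseteq S$ and $j^* < \theta$ with $f \restriction T$ constantly $j^*$, whence $\alpha^* := e(j^*)$ equals $\alpha_i$ for every $i \in T$. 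Then $\psi(i) = \otp(e``i \cap \alpha^*)$ on $T$, which by the previous paragraph coincides with $\varphi^\theta_{\alpha^*}(i)$ on a further club, contradicting $\varphi^\theta_{\alpha^*} <_\theta \psi$. The main obstacle is precisely this step: a naive pigeonhole on the values $\alpha_i \in \beta$ only works when $\beta < \theta$, and for $\theta \leq \beta < \theta^+$ one must replace $\alpha_i$ by its least $e$-preimage, which is automatically $< i$, to obtain a regressive function amenable to Fodor's lemma.
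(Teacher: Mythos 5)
Your proof is correct. The paper does not actually prove this Fact — it is stated as well known, with a pointer to Krueger and Schimmerling — but your argument is the standard one: induction on $\beta < \theta^+$, with the essential step in the least-upper-bound verification being the application of Fodor's lemma to the regressive map $i \mapsto \min\{j < i : e(j) = \alpha_i\}$, exactly as you identify. Two minor remarks. First, the well-definedness lemma, while a reasonable sanity check, is in fact a consequence of the inductive hypothesis (any two surjections onto $\alpha < \beta$ produce $\otp$-functions each $=_\theta$-equivalent to $\varphi^\theta_\alpha$, hence to each other), so it need not be established independently. Second, in the final contradiction the value $\alpha^* = e(j^*)$ could be $0$, a case your $e_\alpha$ construction (set up only for $\alpha \geq 1$) does not cover; one should note separately that then $\psi(i) = \otp(e``i \cap 0) = 0 = \varphi^\theta_0(i)$ on a stationary set, which directly contradicts $\varphi^\theta_0 <_\theta \psi$.
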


The following proposition follows almost immediately from the definition of \emph{canonical 
	function}. 

\begin{proposition} \label{prop_12}
	Suppose that $\beta$ is an ordinal for which $\varphi^\theta_\beta$ is defined, 
	$\psi \in {^\theta}\On$, and the set $S := \{i < \theta \mid \psi(i) < \varphi^\theta_\beta(i)\}$ 
	is stationary in $\theta$. Then there is a stationary $S' \subseteq S$ and an $\alpha < \beta$ 
	such that $\psi(i) \leq \varphi^\theta_\alpha(i)$ for all $i \in S'$.
\end{proposition}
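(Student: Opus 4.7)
The plan is to argue by contradiction, constructing from $\psi$ a function $\psi^*$ that would serve as a $<_\theta$-upper bound for $\langle \varphi^\theta_\alpha \mid \alpha < \beta\rangle$ strictly smaller than $\varphi^\theta_\beta$, contradicting the minimality in the definition of $\varphi^\theta_\beta$.

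Specifically, I would suppose toward contradiction that for every $\alpha < \beta$ and every stationary $S' \subseteq S$, there is some $i \in S'$ with $\varphi^\theta_\alpha(i) < \psi(i)$; equivalently, for each $\alpha < \beta$, the set $\{i \in S \mid \psi(i) \leq \varphi^\theta_\alpha(i)\}$ is nonstationary. Now define $\psi^* \in {^\theta}\On$ by $\psi^*(i) = \psi(i)$ for $i \in S$ and $\psi^*(i) = \varphi^\theta_\beta(i)$ for $i \in \theta \setminus S$. By construction $\psi^*(i) \leq \varphi^\theta_\beta(i)$ pointwise, and on the stationary set $S$ we have $\psi^*(i) = \psi(i) < \varphi^\theta_\beta(i)$.

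Next I would verify that $\psi^*$ is a $<_\theta$-upper bound for $\langle \varphi^\theta_\alpha \mid \alpha < \beta\rangle$. For each $\alpha < \beta$, the set $\{i < \theta \mid \psi^*(i) \leq \varphi^\theta_\alpha(i)\}$ is the union of $\{i \in S \mid \psi(i) \leq \varphi^\theta_\alpha(i)\}$ and $\{i \in \theta \setminus S \mid \varphi^\theta_\beta(i) \leq \varphi^\theta_\alpha(i)\}$. The first piece is nonstationary by our contradictory assumption, and the second is nonstationary because $\varphi^\theta_\alpha <_\theta \varphi^\theta_\beta$ by definition of the canonical functions. Hence $\varphi^\theta_\alpha <_\theta \psi^*$ for every $\alpha < \beta$, so by the minimality clause in the definition of $\varphi^\theta_\beta$ we get $\varphi^\theta_\beta \leq_\theta \psi^*$, and combined with $\psi^* \leq \varphi^\theta_\beta$ pointwise this gives $\psi^* =_\theta \varphi^\theta_\beta$. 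This contradicts stationarity of $S \subseteq \{i < \theta \mid \psi^*(i) < \varphi^\theta_\beta(i)\}$, completing the proof.

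There is no real obstacle here; the only mild subtlety is choosing $\psi^*$ so that it simultaneously dominates all earlier canonical functions (which forces us to set $\psi^* = \varphi^\theta_\beta$ off of $S$) while still being pointwise $\leq \varphi^\theta_\beta$ and strictly smaller on $S$ (which forces $\psi^* = \psi$ on $S$). Once $\psi^*$ is set up correctly, invoking the defining least-upper-bound property of $\varphi^\theta_\beta$ immediately yields the contradiction.
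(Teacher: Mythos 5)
Your proof is correct and is essentially identical to the paper's: the paper's $\tau$ is your $\psi^*$, defined by the same case split on $S$, and both arguments conclude by noting that $\tau$ (resp.\ $\psi^*$) is a $<_\theta$-upper bound for $\langle \varphi^\theta_\alpha \mid \alpha < \beta\rangle$ that $\varphi^\theta_\beta$ must then be $\leq_\theta$, contradicting strict inequality on the stationary set $S$.
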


\begin{proof}
	Suppose not. Then, for all $\alpha < \beta$, there is a club $C_\alpha \subseteq \theta$ such 
	that $\varphi^\theta_\alpha(i) < \psi(i)$ for all $i \in S \cap C_\alpha$. Define a function 
	$\tau \in {^\theta}\On$ by letting 
	\[
		\tau(i) = \begin{cases}
			\psi(i) & \text{if } i \in S \\
			\varphi^\theta_\beta(i) & \text{if } i \in \theta \setminus S.
		\end{cases}
	\]
	Then, by our assumptions, $\tau$ is a $<_\theta$-upper bound for $\langle \varphi^\theta_\alpha \mid 
	\alpha < \beta \rangle$, so, by the definition of \emph{canonical function} we must have 
	$\varphi^\theta_\beta \leq_\theta \tau$, contradicting the fact that $S \subseteq \theta$ is stationary 
	and, for all $i \in S$, we have $\tau(i) = \psi(i) < \varphi^\theta_\beta(i)$.
\end{proof}

The following basic facts will be relevant to our arguments. Throughout the remainder 
of the paper, given a function $\varphi$ taking ordinal values, we let 
$\varphi + 1$ denote the function $\psi$ defined by setting $\dom(\psi) = \dom(\varphi)$ 
and $\psi(i) = \varphi(i)+1$ for all $i \in \dom(\varphi)$.

\begin{proposition} \label{successor_limit_prop}
  Suppose that $\beta > 0$ is an ordinal such that $\varphi^\theta_\beta$ is defined.
  \begin{enumerate}
    \item $\varphi^\theta_{\beta+1}$ is defined and $\varphi^\theta_{\beta + 1} =_\theta \varphi^\theta_\beta 
    +1$.
    \item If $\beta$ is a limit ordinal, then there is a club $C \subseteq \theta$ such that 
    $\varphi^\theta_\beta(i)$ is a limit ordinal for all $i \in C$.
  \end{enumerate}
\end{proposition}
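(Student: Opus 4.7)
For part (1), the plan is to verify directly from the definition of canonical function that the pointwise successor $\varphi^\theta_\beta + 1$, i.e.\ the function $i \mapsto \varphi^\theta_\beta(i) + 1$, serves as a $<_\theta$-least upper bound for $\langle \varphi^\theta_\alpha \mid \alpha < \beta + 1 \rangle$. That it is a $<_\theta$-upper bound is immediate from $\varphi^\theta_\alpha \leq_\theta \varphi^\theta_\beta < \varphi^\theta_\beta + 1$. For minimality, if $\psi$ is any other $<_\theta$-upper bound, then in particular $\varphi^\theta_\beta <_\theta \psi$, and the elementary ordinal fact that $\gamma < \delta$ iff $\gamma + 1 \leq \delta$, applied pointwise on a witnessing club, yields $\varphi^\theta_\beta + 1 \leq_\theta \psi$.

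For part (2), I would argue by contradiction. Suppose $T := \{i < \theta \mid \varphi^\theta_\beta(i) \text{ is a successor ordinal}\}$ is stationary; this is the interesting case, since the set of $i$ with $\varphi^\theta_\beta(i) = 0$ is nonstationary, as $\varphi^\theta_1 =_\theta 1$ by part (1) and $\varphi^\theta_1 <_\theta \varphi^\theta_\beta$ (using $\beta \geq \omega > 1$). Define $\psi \in {^\theta}\On$ by setting $\psi(i) := \varphi^\theta_\beta(i) - 1$ for $i \in T$ and $\psi(i) := \varphi^\theta_\beta(i)$ otherwise. The aim is to show that $\psi$ is still a $<_\theta$-upper bound for $\langle \varphi^\theta_\alpha \mid \alpha < \beta \rangle$; once this is established, minimality of $\varphi^\theta_\beta$ forces $\varphi^\theta_\beta \leq_\theta \psi$, contradicting the fact that $\psi(i) < \varphi^\theta_\beta(i)$ on the stationary set $T$.

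To verify that $\psi$ is a $<_\theta$-upper bound, fix $\alpha < \beta$. Since $\beta$ is a limit ordinal, $\alpha + 1 < \beta$ as well, so $\varphi^\theta_{\alpha + 1} <_\theta \varphi^\theta_\beta$; combining this with part (1), which yields $\varphi^\theta_{\alpha+1} =_\theta \varphi^\theta_\alpha + 1$, gives a club $C$ on which $\varphi^\theta_\alpha(i) + 1 < \varphi^\theta_\beta(i)$. On $C \cap T$ this rearranges to $\varphi^\theta_\alpha(i) < \varphi^\theta_\beta(i) - 1 = \psi(i)$, while on $C \setminus T$ we have $\varphi^\theta_\alpha(i) < \varphi^\theta_\beta(i) = \psi(i)$ trivially. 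The main potential obstacle is precisely this ``absorption of the $-1$'' step, and it is here that the limit hypothesis on $\beta$ is essential: we need $\alpha + 1$ to still lie strictly below $\beta$, so that the extra unit of slack supplied by part (1) compensates for the pointwise decrement in the definition of $\psi$.
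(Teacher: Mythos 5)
Your proof is correct. Part (1) is essentially identical to the paper's argument. For part (2), both you and the paper introduce the same decrement function $\psi$ (subtract one at each point of the offending stationary set, leave $\varphi^\theta_\beta$ unchanged elsewhere) and derive a contradiction, but the contradiction is wrung out slightly differently. The paper invokes Proposition \ref{prop_12} to extract a single $\alpha < \beta$ and a stationary $S' \subseteq S$ with $\psi \leq \varphi^\theta_\alpha$ on $S'$, then observes $\varphi^\theta_{\alpha+1} \geq_{} \varphi^\theta_\beta$ on $S'$, contradicting $\alpha + 1 < \beta$. You instead verify directly (uniformly over all $\alpha < \beta$, using the limit hypothesis to pass to $\alpha + 1$ and part (1) to absorb the decrement) that $\psi$ is a $<_\theta$-upper bound for $\langle \varphi^\theta_\alpha \mid \alpha < \beta \rangle$, so the minimality clause in the definition of canonical function gives $\varphi^\theta_\beta \leq_\theta \psi$, contradicting $\psi < \varphi^\theta_\beta$ on $T$. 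In effect you re-derive inline the relevant consequence of Proposition \ref{prop_12} rather than citing it; the argument is valid, marginally longer, and makes the role of the limit hypothesis slightly more explicit. Your preliminary remark that the set where $\varphi^\theta_\beta(i) = 0$ is nonstationary is a correct and appropriate addition, since the conclusion of part (2) requires excluding both zero and successor values on a club, a point the paper passes over silently.
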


\begin{proof}
  (1) Clearly, $\varphi^\theta_\beta + 1$ is a $<_\theta$-upper bound for $\langle \varphi^\theta_\alpha 
  \mid \alpha \leq \beta \rangle$. Moreover, if $\psi$ is any other $<_\theta$-upper bound, then there 
  must be a club $C \subseteq \theta$ such that $\varphi^\theta_\beta(i) + 1 \leq \psi(i)$ for all 
  $i \in C$, and therefore $\varphi^\theta_\beta + 1 \leq_\theta \psi$. It follows that 
  $\varphi^\theta_{\beta+1} =_\theta \varphi^\theta_\beta + 1$.
  
  (2) Suppose for sake of contradiction that $\beta$ is a limit ordinal and yet there is a stationary 
  set $S \subseteq \theta$ such that $\varphi^\theta_\beta(i) = \gamma_i + 1$ is a successor ordinal 
  for all $i \in S$. Define a function $\psi \in {^\theta}\On$ by letting
  \[
    \psi(i) = \begin{cases}
      \gamma_i & \text{if } i \in S \\
      \varphi^\theta_\beta(i) & \text{otherwise}
    \end{cases}
  \]
  for all $i < \theta$. By Proposition \ref{prop_12}, we can find a stationary $S' \subseteq S$ 
  and an ordinal $\alpha < \beta$ such that $\psi(i) \leq \varphi^\theta_\alpha(i)$ for all 
  $i \in S'$, and hence, by removing a nonstationary subset from $S'$ if necessary, we can assume 
  that $\psi(i) + 1 \leq \varphi^\theta_{\alpha+1}(i)$ for all $i \in S'$. But, by our definition of 
  $\psi$, we have $\psi(i) + 1 = \varphi^\theta_\beta(i)$ for all $i \in S'$, and hence 
  $\varphi^\theta_{\alpha+1} \not<_\theta \varphi^\theta_\beta$, contradicting the fact that $\alpha + 1 
  < \beta$.
\end{proof}

\begin{proposition} \label{canonical_rank_prop}
	Suppose that $\theta$ is a regular uncountable cardinal, $\beta$ is an ordinal such that 
	$\varphi^\theta_\beta$ is defined, and $S \subseteq \theta$ is stationary. Then 
	$\|\varphi^\theta_\beta \restriction S\|_S = \beta$.
\end{proposition}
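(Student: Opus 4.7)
The plan is to prove $\|\varphi^\theta_\beta \restriction S\|_S = \beta$ by induction on $\beta$, handling the lower and upper bounds separately. Two general monotonicity properties of the Galvin-Hajnal rank will be used throughout: first, if $\chi \leq_S \psi$ for $\chi, \psi \in {^S}\On$, then $\|\chi\|_S \leq \|\psi\|_S$, which follows immediately from the recursive definition since any $\chi' <_S \chi$ also satisfies $\chi' <_S \psi$; and second, if $T \subseteq S$ is stationary and $\chi \in {^S}\On$, then $\|\chi\|_S \leq \|\chi \restriction T\|_T$, as already noted just after Definition \ref{galvin_hajnal_def}.

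For the lower bound, I would observe that for each $\alpha < \beta$ the definition of canonical function supplies $\varphi^\theta_\alpha <_\theta \varphi^\theta_\beta$, hence $\varphi^\theta_\alpha \restriction S <_S \varphi^\theta_\beta \restriction S$. Combining this with the inductive hypothesis $\|\varphi^\theta_\alpha \restriction S\|_S = \alpha$ and the recursive definition of the rank yields $\alpha + 1 \leq \|\varphi^\theta_\beta \restriction S\|_S$; taking the supremum over $\alpha < \beta$ gives the desired inequality $\beta \leq \|\varphi^\theta_\beta \restriction S\|_S$.

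For the upper bound, it suffices by definition to show that every $\psi <_S \varphi^\theta_\beta \restriction S$ has $\|\psi\|_S < \beta$. At a successor stage $\beta = \alpha + 1$, Proposition \ref{successor_limit_prop}(1) yields $\varphi^\theta_\beta =_\theta \varphi^\theta_\alpha + 1$, so any such $\psi$ satisfies $\psi \leq_S \varphi^\theta_\alpha \restriction S$; the first monotonicity property and the inductive hypothesis then give $\|\psi\|_S \leq \alpha < \beta$. At a limit stage $\beta$, I would extend $\psi$ to $\tilde\psi \in {^\theta}\On$ by setting $\tilde\psi(i) = \varphi^\theta_\beta(i)$ off $S$, which guarantees that $\{i < \theta \mid \tilde\psi(i) < \varphi^\theta_\beta(i)\}$ is a stationary subset of $S$. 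Applying Proposition \ref{prop_12} to $\tilde\psi$ then delivers a stationary $S' \subseteq S$ and an $\alpha < \beta$ with $\psi(i) \leq \varphi^\theta_\alpha(i)$ for all $i \in S'$, and chaining both monotonicity facts with the inductive hypothesis yields $\|\psi\|_S \leq \|\psi \restriction S'\|_{S'} \leq \|\varphi^\theta_\alpha \restriction S'\|_{S'} = \alpha < \beta$.

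The main (mild) obstacle is the limit case of the upper bound, where one has to extend $\psi$ in just the right way so that Proposition \ref{prop_12} returns a stationary set sitting inside $S$, and then combine this with both monotonicity properties to transfer the resulting bound on $\|\psi \restriction S'\|_{S'}$ back to a bound on the desired quantity $\|\psi\|_S$. Everything else reduces to direct bookkeeping with the recursive definition of the Galvin-Hajnal rank.
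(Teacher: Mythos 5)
Your proof is correct and follows essentially the same path as the paper's: induction on $\beta$, with the lower bound obtained directly from $\varphi^\theta_\alpha <_\theta \varphi^\theta_\beta$ and the inductive hypothesis, and the upper bound obtained by fixing $\psi <_S \varphi^\theta_\beta \restriction S$, invoking Proposition \ref{prop_12} to find a stationary $S' \subseteq S$ and $\alpha < \beta$ with $\psi \leq \varphi^\theta_\alpha$ on $S'$, and then chaining the two monotonicity facts with the inductive hypothesis. Two minor observations: your explicit extension of $\psi$ to $\tilde\psi$ (setting it to $\varphi^\theta_\beta$ off $S$) makes visible a detail the paper's proof glosses over when applying Proposition \ref{prop_12}, which formally requires a function on all of $\theta$; and your successor/limit case split in the upper bound is unnecessary, since the argument you give for the limit case (via Proposition \ref{prop_12}) already works uniformly for any $\beta > 0$, which is exactly what the paper does.
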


\begin{proof}
	The proof is by induction on $\beta$, so we assume that, for all $\alpha < \beta$ and all 
	stationary $T \subseteq \theta$, we have 
	$\|\varphi^\theta_\alpha \restriction T \|_T = \alpha$. Since $\varphi^\theta_\alpha 
	<_\theta \varphi^\theta_\beta$ for all $\alpha < \beta$, it follows that 
	$\|\varphi^\theta_\beta \restriction S\|_S \geq \beta$.
	
	For the opposite inequality, fix a function $\psi \in {^S}\On$ with $\psi <_S \varphi^\theta_\beta 
	\restriction S$; it suffices to show that $\|\psi\|_S < \beta$. An application of Proposition 
	\ref{prop_12} yields a stationary $T \subseteq S$ and an $\alpha < \beta$ such that 
	$\psi(i) \leq \varphi^\theta_\alpha(i)$ for all $i \in T$. By the induction hypothesis, we have 
	$\|\varphi^\theta_\alpha \restriction T\|_T = \alpha$, so it follows that $\|\psi
	\restriction T\|_T \leq \alpha$. But then, since $T \subseteq S$, 
	this implies that $\|\psi\|_S \leq \alpha < \beta$, as desired.
\end{proof}

We now recall two results from \cite{jech_variation}, the first of which is already implicit in 
\cite{cardinal_arithmetic}.

\begin{theorem} {\cite[Corollary 2.3]{jech_variation}} \label{jech_thm_1}
  Suppose that $A$ is an infinite set, $I$ is an ideal on $A$, and $\langle \mu_a \mid a \in A \rangle$ 
  is a sequence of regular cardinals such that $\mu_a > |A|^+$ for all $a \in A$. Then there exist a 
  set $B \in I^+$, a regular cardinal $\lambda > |A|^+$, and a sequence 
  $\vec{f} = \langle f_\alpha \mid \alpha < \lambda \rangle$ such that $\vec{f}$ is $<_{I \restriction 
  B}$-increasing and $<_{I \restriction B}$-cofinal in $\prod_{a \in B} \mu_a$.
\end{theorem}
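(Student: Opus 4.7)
The plan is to derive this as a consequence of Shelah's theorem on the existence of exact upper bounds, which is the combinatorial heart of PCF theory. Set $\kappa = |A|^+$. I would attempt, by transfinite recursion on the length, to build a $<_I$-increasing sequence $\vec{f}$ of functions in $\prod_{a \in A} \mu_a$. At successor stages this is trivial: set $f_{\alpha+1} = f_\alpha + 1$. At a limit stage $\delta$ with $\cf(\delta) \leq \kappa$, a routine argument exploiting $\min_a \mu_a > \kappa$ produces a $<_I$-upper bound in $\prod \mu_a$, and the construction continues.

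The interesting case is a limit stage $\delta$ with $\lambda := \cf(\delta) > \kappa$. Here I invoke Shelah's theorem, which in this setting asserts that $\langle f_\alpha \mid \alpha < \delta \rangle$ either admits an exact upper bound modulo $I$ in $\prod \mu_a$ (in which case the recursion can be continued past $\delta$), or else there exists a set $B \in I^+$ such that $\langle f_\alpha \restriction B \mid \alpha < \delta \rangle$ is already $<_{I \restriction B}$-cofinal in $\prod_{a \in B} \mu_a$, with cofinality $\lambda$. In this latter case, we have produced exactly the desired conclusion: a set $B \in I^+$, a regular cardinal $\lambda = \cf(\delta) > \kappa = |A|^+$, and a cofinal $<_{I \restriction B}$-increasing sequence.

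The main obstacle is therefore reduced to verifying that the recursion cannot continue indefinitely, so that the cofinal alternative of Shelah's theorem must eventually occur. But a strictly $<_I$-increasing sequence in $\prod_{a \in A} \mu_a$ has length bounded by a cardinal just past the cardinality of $\prod_a \mu_a$; so at some limit stage $\delta$ of cofinality greater than $\kappa$, no exact upper bound can exist, forcing the restriction-to-$B$ alternative. The hypothesis $\mu_a > |A|^+$ is used precisely to rule out the orthogonal failure mode in Shelah's trichotomy, namely the one witnessed by a function with too-small cofinalities $\leq |A|$; without this hypothesis, neither alternative above would be guaranteed.
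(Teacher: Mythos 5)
The paper does not prove this statement; it is quoted as Corollary~2.3 of \cite{jech_variation}, so there is no in-paper proof to compare against. Your proposed route through Shelah's exact-upper-bound machinery is the right circle of ideas for results of this kind, but as written there is a genuine gap. Shelah's theorem here is a \emph{trichotomy}, not a dichotomy: for a $<_I$-increasing sequence of regular length $\lambda > |A|^+$, one of three alternatives holds --- Good (an exact upper bound $h$ exists with $\cf(h(a)) > |A|$ for all $a$), Bad (the sequence is ``trapped'' modulo an ultrafilter inside a product $\prod_a S_a$ with each $|S_a| \le |A|$), or Ugly (the cuts $\{a : f_\alpha(a) > g(a)\}$ fail to stabilize mod $I$ for some $g$). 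In the Good case your analysis is essentially right once you truncate the eub to $h' := \min(h,\mu)$ pointwise: either $h' <_I \mu$ and you can continue past $\delta$, or $B := \{a : h'(a) = \mu_a\} \in I^+$ and $\vec f \restriction B$ is $<_{I\restriction B}$-cofinal. But your proposal simply does not address Bad or Ugly, and the claim that $\mu_a > |A|^+$ ``rules out the orthogonal failure mode'' is not correct: that hypothesis yields $|A|^{++}$-directedness of $\prod \vec\mu/I$, which controls limit stages of cofinality $\le |A|^+$, but it does nothing to prevent Bad or Ugly from arising for an arbitrarily-built sequence at stages of cofinality in, say, $(|A|^+, 2^{|A|}]$. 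Consequently, when the recursion halts because no strict $<_I$-upper bound in $\prod\mu_a$ exists, you cannot conclude the ``restriction-to-$B$'' alternative: the halt could equally be explained by Bad or Ugly, neither of which produces the desired cofinal restriction.

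The standard repair is to build $\vec f$ with extra smoothness so that the trichotomy actually collapses to your dichotomy at stages of large cofinality: at limit stages of a suitable fixed uncountable cofinality $\le |A|$ one should take a \emph{minimally club-obedient} upper bound (the pointwise supremum along a club of the appropriate order type), not an arbitrary one. With that persistence built in, Bad and Ugly are provably excluded at all later limits of cofinality $> |A|^+$, the Good alternative is forced at the terminal stage, and the truncated eub then yields the $I$-positive set $B$ and the cofinal sequence of length $\lambda = \cf(\delta) > |A|^+$. Your step of ``take any upper bound at small-cofinality limits'' discards exactly the structure that this argument needs, so as it stands the proof does not go through.
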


Our statement of the next theorem is less general than its statement in \cite{jech_variation}; we focus 
on ideals of the form $\mathrm{NS}_\theta \restriction S$ rather than the arbitrary normal ideals of 
\cite{jech_variation}.

\begin{theorem} {\cite[Main Theorem]{jech_variation}} \label{jech_thm_2}
  Suppose that
  \begin{enumerate}
    \item $\kappa$ is a singular cardinal and $\cf(\kappa) = \theta > \omega$;
    \item $\langle \kappa_i \mid i < \theta \rangle$ is an increasing, continuous sequence of cardinals 
    converging to $\kappa$;
    \item $\langle \mu_i \mid i < \theta \rangle$ is an increasing sequence of regular cardinals 
    such that, for some function $\varphi \in {^\theta}\On$, we have $\mu_i = \kappa_i^{+\varphi(i)}$ 
    for all $i < \theta$;
    \item $S \subseteq \theta$ is stationary;
    \item $\lambda$ is a regular cardinal and $\vec{f} = \langle f_\alpha \mid \alpha < \lambda \rangle$ 
    is a $<_S$-increasing and $<_S$-cofinal sequence from $\prod_{i \in S} \mu_i$.
  \end{enumerate}
  Then $\lambda \leq \kappa^{+\|\varphi \restriction S\|_S}$.
\end{theorem}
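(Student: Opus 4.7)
The natural plan is an induction on $\eta = \|\varphi \restriction S\|_S$, modeled on the classical Galvin--Hajnal argument (Theorem \ref{galvin_hajnal_thm}) but carried out in the abstract cofinality setting. Assume toward contradiction that $\lambda > \kappa^{+\eta}$; by truncating $\vec{f}$ at its first $(\kappa^{+\eta})^+$ elements, we may take $\lambda = (\kappa^{+\eta})^+$. The base case $\eta = 0$ needs a separate direct argument: then $\varphi \equiv 0$ on a club $C \subseteq S$, so $\mu_i = \kappa_i$ on $C \cap S$; regularity of each $\mu_i$ forces the length of any $<_S$-increasing $<_S$-cofinal sequence in $\prod_{i \in S} \mu_i$ to be at most $\kappa$.

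For the inductive step ($\eta > 0$), the plan is to extract a smaller-rank configuration and invoke the inductive hypothesis. For each $\alpha < \lambda$, since $f_\alpha(i) < \mu_i = \kappa_i^{+\varphi(i)}$, there is a canonically defined $\psi_\alpha(i) < \varphi(i)$ with $f_\alpha(i) < \kappa_i^{+\psi_\alpha(i)+1}$. Thus $\psi_\alpha <_S \varphi \restriction S$, and consequently $\|\psi_\alpha\|_S < \eta$. Fixing a canonical function $\varphi^\theta_\eta$ (which has rank $\eta$ on every stationary $T \subseteq \theta$ by Proposition \ref{canonical_rank_prop}) and applying Proposition \ref{prop_12} to each $\psi_\alpha$, I would obtain, for each $\alpha$, a stationary $T_\alpha \subseteq S$ and an ordinal $\beta_\alpha < \eta$ with $\psi_\alpha(i) \leq \varphi^\theta_{\beta_\alpha}(i)$ for all $i \in T_\alpha$.

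The next step is a pigeonhole across $\alpha < \lambda$ on the pairs $(T_\alpha, \beta_\alpha)$. Under the paper's running hypothesis $\mu^\theta \leq \kappa$ for all $\mu < \kappa$, we have $2^\theta \leq \kappa$ and $\eta < \kappa^{+\eta} < \lambda$, so the number of possible pairs is strictly less than $\lambda$, and some pair $(T, \beta)$ is realized on a set $A \subseteq \lambda$ of size $\lambda$. The subsequence $\langle f_\alpha \restriction T \mid \alpha \in A \rangle$ is then a $<_T$-increasing sequence lying entirely in $\prod_{i \in T} \kappa_i^{+\varphi^\theta_\beta(i)+1}$, and one checks it is also $<_T$-cofinal in this smaller product by extending any test function $g$ on $T$ by zeros to an element of $\prod_{i \in S} \mu_i$ and applying $<_S$-cofinality of the original $\vec{f}$ together with regularity of $\lambda$. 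By Proposition \ref{canonical_rank_prop} (and Proposition \ref{successor_limit_prop}(1)), the Galvin--Hajnal rank of $\varphi^\theta_\beta \restriction T + 1$ is $\beta + 1 < \eta$, so the inductive hypothesis yields $\lambda \leq \kappa^{+(\beta+1)} \leq \kappa^{+\eta}$, contradicting $\lambda = (\kappa^{+\eta})^+$.

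The main obstacle I anticipate is ensuring that the pigeonhole-extracted subsequence remains $<_T$-cofinal in the smaller product $\prod_{i \in T} \kappa_i^{+\varphi^\theta_\beta(i)+1}$; one must argue this is preserved under passing to subsequences, which relies on the hypothesis that $\lambda$ is regular and strictly exceeds the number of pigeonhole cells. A secondary technical point is the base case $\eta = 0$, which, despite its apparent simplicity, must be argued separately using the regularity of the $\mu_i$.
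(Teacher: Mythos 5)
The paper does not prove Theorem~\ref{jech_thm_2}; it is cited without proof from \cite{jech_variation}, so there is no internal argument to compare against. Evaluating your attempt on its own terms, there are several genuine gaps.

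First, the base case is misstated: $\|\varphi \restriction S\|_S = 0$ is equivalent to $\{i \in S \mid \varphi(i) = 0\}$ being \emph{stationary}, not to $\varphi$ vanishing on a club relative to $S$. (Recall $\psi <_S \varphi$ means $\psi < \varphi$ on a club intersected with $S$, so the constant function $0$ fails to be $<_S \varphi$ exactly when $\varphi$ has stationarily many zeros on $S$.) Your base-case argument assumes much more than this.

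Second, the truncation ``we may take $\lambda = (\kappa^{+\eta})^+$'' is not legitimate here: passing to a proper initial segment of $\vec f$ destroys $<_S$-cofinality, and your inductive step explicitly invokes cofinality of the original $\vec f$ to argue that the extracted subsequence is $<_T$-cofinal in the smaller product. After truncating you no longer have a cofinal sequence available, so that step has nothing to feed on. The standard move is to keep $\lambda$ as it is and run the argument for the full sequence.

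Third, and most seriously, the appeal to Proposition~\ref{prop_12} is unjustified. That proposition compares a function $\psi$ against a \emph{canonical} function $\varphi^\theta_\eta$, and requires as a hypothesis that $\{i \mid \psi(i) < \varphi^\theta_\eta(i)\}$ is stationary. What you actually know is $\psi_\alpha <_S \varphi \restriction S$, where $\varphi$ is the arbitrary function from the theorem statement, not a canonical one. The fact that $\|\varphi \restriction S\|_S = \eta$ does not give $\varphi \leq_S \varphi^\theta_\eta$ (a function can have rank $0$ on $S$ while being enormous on a co-stationary piece of $S$), so you cannot transfer the inequality $\psi_\alpha <_S \varphi$ into the hypothesis needed for Proposition~\ref{prop_12}. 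The whole pigeonhole onto pairs $(T,\beta)$ therefore never gets off the ground. Relatedly, your pigeonhole count uses the extra assumption $\mu^\theta \leq \kappa$, which is not part of the hypotheses of Theorem~\ref{jech_thm_2}, and the argument also tacitly assumes that $\varphi^\theta_\gamma$ exists for all $\gamma \leq \eta$, which fails once $\eta \geq \theta^+$ is large; the theorem itself imposes no such restriction. A correct proof (as in \cite{jech_variation}) works with the arbitrary $\varphi$ directly via the rank function, without detouring through canonical functions.
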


Putting these two results together yields the following corollary.

\begin{corollary} \label{scale_cor}
  Suppose that $\theta$ is a regular uncountable cardinal, $S \subseteq \theta$ is stationary, 
  and $\beta$ is an ordinal such that $\varphi^\theta_\beta$ is defined. Suppose also that 
  $\kappa$ is a singular cardinal, $\cf(\kappa) = \theta$, and $\langle \kappa_i \mid i < \theta 
  \rangle$ is an increasing, continuous sequence of cardinals converging to $\kappa$ 
  with $\kappa_0 > \theta$. Then there is 
  a stationary $S' \subseteq S$ and a sequence $\vec{f} = \langle f_\alpha \mid \alpha < 
  \lambda \rangle$ from $\prod_{i \in S'} \kappa_i^{+\varphi^\theta_\beta(i)+1}$ such that
  \begin{enumerate}
    \item $\vec{f}$ is $<_{S'}$-increasing and $<_{S'}$-cofinal in $\prod_{i \in S'} 
    \kappa_i^{+\varphi^\theta_\beta(i)+1}$;
    \item $\lambda \leq \kappa^{+\beta + 1}$.
  \end{enumerate}
  In particular, there exists a $<_{S'}$-cofinal subset $\mc F \subseteq \prod_{i \in S'} 
  \kappa_i^{+\varphi^\theta_\beta(i)+1}$ such that $|\mc F| \leq \kappa^{+\beta+1}$.
\end{corollary}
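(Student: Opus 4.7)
The plan is to first produce a scale of some regular length $\lambda$ in $\prod_{i \in S'} \kappa_i^{+\varphi^\theta_\beta(i)+1}$ by invoking Theorem \ref{jech_thm_1}, and then use Theorem \ref{jech_thm_2} together with the rank identity $\|\varphi^\theta_\beta + 1\|_{S'} = \beta + 1$ to bound $\lambda$ by $\kappa^{+\beta+1}$.

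First I would set $\mu_i := \kappa_i^{+\varphi^\theta_\beta(i)+1}$. Since $\kappa_0 > \theta$, each $\mu_i$ is a regular cardinal with $\mu_i \geq \kappa_i^+ \geq \theta^{++} > |S|^+$, so Theorem \ref{jech_thm_1} applied to $A = S$ with the ideal $I = \mathrm{NS}_\theta \restriction S$ yields a stationary $S' \subseteq S$, a regular cardinal $\lambda > \theta^+$, and a $<_{S'}$-increasing, $<_{S'}$-cofinal sequence $\vec{f} = \langle f_\alpha \mid \alpha < \lambda \rangle$ in $\prod_{i \in S'} \mu_i$. This delivers clause (1) immediately.

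For clause (2), I would apply Theorem \ref{jech_thm_2} with the ordinal-valued function $\varphi(i) := \varphi^\theta_\beta(i) + 1$, which gives $\lambda \leq \kappa^{+\|\varphi \restriction S'\|_{S'}}$. The task thus reduces to showing $\|\varphi \restriction S'\|_{S'} = \beta + 1$. By Proposition \ref{successor_limit_prop}(1) we have $\varphi^\theta_\beta + 1 =_\theta \varphi^\theta_{\beta+1}$, so in particular $\varphi \restriction S' =_{S'} \varphi^\theta_{\beta+1} \restriction S'$, and these two functions therefore have the same Galvin-Hajnal rank on $S'$. By Proposition \ref{canonical_rank_prop} this common rank is exactly $\beta + 1$, completing clause (2). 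The final ``in particular'' assertion is immediate: take $\mathcal{F}$ to be the range of $\vec{f}$.

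The one technical wrinkle I anticipate is that Theorem \ref{jech_thm_2} as stated requires the sequence $\langle \mu_i \mid i < \theta \rangle$ to be \emph{increasing} on all of $\theta$, whereas our definition of $\mu_i$ in terms of $\varphi^\theta_\beta(i)$ need not be monotone globally. This is easily repaired by modifying $\mu_i$ off a suitable club (equivalently, modifying $\varphi^\theta_\beta$ off a nonstationary set) so as to enforce monotonicity; since the conclusion of Theorem \ref{jech_thm_2} depends only on $\varphi \restriction S'$ modulo $\mathrm{NS}_\theta \restriction S'$, and $S'$ will still meet the club on which the original and modified $\varphi$ agree, this does not affect the rank computation. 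No genuinely new ingredient beyond the results already collected in this section is needed.
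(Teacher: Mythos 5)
Your proof is correct and takes essentially the same route as the paper: apply Theorem \ref{jech_thm_1} to $S$ with $\mathrm{NS}_\theta \restriction S$ and $\mu_i = \kappa_i^{+\varphi^\theta_\beta(i)+1}$ to extract a stationary $S'$ and a scale of regular length $\lambda$, then feed that scale into Theorem \ref{jech_thm_2} and compute the rank $\|(\varphi^\theta_\beta + 1) \restriction S'\|_{S'} = \beta + 1$ via Propositions \ref{successor_limit_prop}(1) and \ref{canonical_rank_prop}. Your closing remark about the monotonicity hypothesis on $\langle \mu_i \mid i < \theta \rangle$ in Theorem \ref{jech_thm_2} is a valid observation that the paper's proof passes over silently; the fix you describe (shrink to a club on which the $\mu_i$ are increasing, which exists because each tail sum is bounded below $\kappa$, and note that the rank and the scale are both invariant under discarding a nonstationary piece of $S'$, using that canonical functions have the same rank on every stationary set) is the standard and correct way to handle it.
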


\begin{proof}
  By Proposition \ref{successor_limit_prop}, we have $\varphi^\theta_{\beta + 1} = \varphi^\theta_\beta 
  + 1$. For each $i \in S$, let $\mu_i := \kappa_i^{+\varphi^\theta_\beta(i)+1}$. Then, applying 
  Theorem \ref{jech_thm_1} to the set $S$, the ideal $\mathrm{NS}_\theta \restriction S$, and 
  the sequence $\langle \mu_i \mid i \in S \rangle$ of regular cardinals, we obtain a stationary 
  $S' \subseteq S$, a regular cardinal $\lambda > |A|^+$, and a sequence $\vec{f} = 
  \langle f_\alpha \mid \alpha < \lambda \rangle$ such that $\vec{f}$ is $<_{S'}$-increasing and 
  $<_{S'}$-cofinal in $\prod_{i \in S'} \mu_i$. Then Theorem \ref{jech_thm_2} implies that 
  $\lambda \leq \kappa^{+\|\varphi^\theta_{\beta + 1} \restriction S'\|_{S'}}$, so, by Proposition 
  \ref{canonical_rank_prop}, we have $\lambda \leq \kappa^{+\beta+1}$.
\end{proof}

\section{Density} \label{density_section}

In this section, we recall some notions of \emph{density} that will play a role throughout 
the paper. The first of these notions was the subject of Kojman's \cite{kojman_density}.

\begin{definition}\cite{kojman_density} \label{density_def}
  Suppose that $\theta \leq \mu$ are infinite cardinals. The \emph{$\theta$-density of 
  $\mu$}, denoted $d(\theta, \mu)$, is the minimal cardinality of a set 
  $\mc Y \subseteq [\mu]^\theta$ that is dense in $([\mu]^\theta, \subseteq)$, i.e., 
  for all $x \in [\mu]^\theta$, there is $y \in \mc Y$ such that $y \subseteq x$.
  
  If $\theta < \mu$, then $\underline{[\mu]^\theta}$ denotes the set of $x \in [\mu]^\theta$ such that 
  $\sup(x) < \mu$, and the \emph{lower $\theta$-density of $\mu$}, denoted 
  $\underline{d(\theta, \mu)}$, is the minimal cardinality of a set 
  $\mc Y \subseteq \underline{[\mu]^\theta}$ that is dense in 
  $(\underline{[\mu]^\theta}, \subseteq)$.
\end{definition}

\begin{remark}
  In \cite{kojman_density}, the $\theta$-density of $\mu$ is denoted by $\mc D(\mu, \theta)$. 
  We have chosen the notation $d(\theta, \mu)$ to match the established notation $m(\theta, \mu)$ for 
  \emph{meeting numbers}, which are among the cardinal characteristics considered here. Our notation 
  for both density numbers and meeting numbers follows \cite{matet_towers_and_clubs}.

  Note that, if $\theta < \mu$, then $\underline{d(\theta, \mu)} = \mu \cdot \sum_{\nu < \mu} 
  d(\theta, \nu)$. Therefore, if $\nu^\theta \leq \mu$ for all $\nu < \mu$, then 
  $\underline{d(\theta, \mu)} = \mu$.

  As remarked in \cite{kojman_density}, if $\cf(\mu) \neq \cf(\theta)$, then 
  \[
  d(\theta, \mu) = \underline{d(\theta, \mu)} = \mu \cdot \sum_{\nu < \mu} 
  d(\theta, \nu).
  \]
  In particular, if $\cf(\mu) \neq \cf(\theta)$ and $\nu^\theta \leq \mu$ for all $\nu < \mu$, 
  then $d(\theta, \mu) = \mu$.  
  
  If $\cf(\mu) = \cf(\theta)$, then a routine diagonalization argument shows that
  $d(\theta, \mu) \geq \mu^+$.
\end{remark}

The main result of \cite{kojman_density} is a version of Silver's theorem for the density 
number $d(\cf(\kappa), \kappa)$; this result served as direct motivation for the 
initial work that led to the results of this paper. Our main result here, when applied to 
the density number, will generalize and slightly improve upon the results of \cite{kojman_density}.

If $I$ is an ideal over a set $X$, then the \emph{density} of $I$, which we will 
denote $d(I)$, is the minimal cardinality of a set $\mc Y \subseteq I^+$ 
such that, for all $S \in I^+$, there is $T \in \mc Y$ such that $T \setminus S 
\in I$. We will particularly be interested in densities of the form $d(\mathrm{NS}_\theta 
\restriction S)$, where $S$ is a stationary set of a regular uncountable cardinal $\theta$. 
Concretely, $d(\mathrm{NS}_\theta \restriction S)$ is the minimal cardinality of a 
collection $\mc T$ of stationary subsets of $S$ such that, for every stationary 
$S' \subseteq S$, there is $T \in \mc T$ such that $T \setminus S'$ is nonstationary in $\theta$.

Finally, we introduce a notion of density that, in a sense, combines the two notions introduced 
in this section thus far.

\begin{definition} \label{stat_density_def}
  Suppose that $\theta$ and $\kappa$ are infinite cardinals, with $\theta$ regular, and 
  suppose that $S \subseteq \theta$ is stationary. Then 
  the \emph{stationarity density of ${^S}\kappa$}, which we denote by 
  $d_{\mathrm{stat}}({^S}\kappa)$, is the minimal cardinality of a family $\mc F$ of 
  functions such that
  \begin{enumerate}
    \item every $f \in \mc F$ is a function from a stationary subset of $S$ 
    to $\kappa$;
    \item for every function $g$ from a stationary subset of $S$ to $\kappa$, 
    there is $f \in \mc F$ such that the set 
    \[
      \{i \in \dom(f) \mid i \notin \dom(g) \text{ or } f(i) \neq g(i)\}
    \]
    is nonstationary. (Less precisely but more evocatively, $f$ is contained in 
    $g$ modulo a nonstationary set.)
  \end{enumerate}
  
  In analogy with lower density, we define the \emph{lower stationary density of 
  ${^S}\kappa$}, denoted $\underline{d_{\mathrm{stat}}({^S}\kappa)}$, in the 
  same way as $d_{\mathrm{stat}}({^S}\kappa)$, except that, in item (2), we only 
  consider functions whose ranges are bounded below $\kappa$ (and hence we can require 
  that all of our functions in $\mc F$ also have ranges bounded below $\kappa$).
\end{definition}

\begin{remark}
	Note that $d(\mathrm{NS}_\theta \restriction S) \leq \underline{d_{\mathrm{stat}}({^S}\kappa)}$. 
	Also, whenever $T \subseteq S$ are stationary subsets of $\theta$, we have
	\begin{itemize}	
	 \item $d(\mathrm{NS}_\theta \restriction T) \leq d(\mathrm{NS}_\theta 
	\restriction S)$;
	\item $d_{\mathrm{stat}}({^{T}}\kappa) \leq d_{\mathrm{stat}}({^S}\kappa)$;
	\item $\underline{d_{\mathrm{stat}}({^{T}}\kappa)} \leq 
	\underline{d_{\mathrm{stat}}({^S}\kappa)}$.
	\end{itemize} 
\end{remark}

\section{The general framework} \label{framework_section}

The cardinal characteristics we consider in this paper all have the following form:
a set $\mc X$ and a binary relation $\squig$ on $\mc X$ are fixed, and the relevant cardinal 
characteristic is then $\cf(\mc X, \squig)$, i.e., the minimal cardinality of a subset 
$\mc Y \subseteq \mc X$ such that, for all $x \in \mc X$, there is $y \in \mc Y$ such that 
$x \squig y$.

Because of the structural similarity of these cardinal characteristics, the inductive 
steps in the proofs of our main results end up being essentially the same, so in this section 
we prove a general lemma that we can directly apply to all of the specific situations 
under consideration here, and that we expect will find application beyond the scope of this 
paper, as well.

In order to state and prove our general lemma, let us fix some objects and notation for the 
remainder of this section:
\begin{itemize}
  \item $\kappa$ is a singular cardinal and $\cf(\kappa) = \theta > \omega$;
  \item $\langle \kappa_i \mid i < \theta \rangle$ is an increasing, continuous sequence of 
  cardinals converging to $\kappa$, with $\kappa_0 > \theta$;
  \item $\mc X$ is a set and $\squig$ is a binary relation on $\mc X$;
  \item for each $i < \theta$, $\mc X_i$ is a set and $\squig_i$ is a binary relation on $\mc X_i$;
  \item for each $i < \theta$, $\pi_i:\mc X \rightarrow \mc X_i$ is a function;
  \item $e:\mathrm{NS}_\theta^+ \rightarrow \mathrm{Card}$ is a function such that, for all 
  stationary $T \subseteq S \subseteq \theta$, we have $e(T) \leq e(S)$.
\end{itemize}

\begin{remark}
  To help orient the reader, let us preview here some of the eventual interpretations of these objects. 
  In a typical application, we might have $\mc X = \mc P(\kappa)$, $\mc X_i = \mc P(\kappa_i)$, and 
  $\pi_i(x) = x \cap \kappa_i$, or $\mc X = {^\kappa}\kappa$, $\mc X_i = {^{\kappa_i}} \kappa_i$, and 
  $\pi_i(x)$ is a modification of $x \restriction \kappa_i$ to ensure that it takes values in 
  $\kappa_i$. The function $e$ will typically (though not always) output one of the density
  numbers introduced in Section \ref{density_section}.
\end{remark}

In this context, if $S \subseteq \theta$ is stationary and $\beta$ is an ordinal for which 
the canonical function $\varphi^\theta_\beta$ is defined (recall the discussion of 
canonical functions following Definition \ref{galvin_hajnal_def}), 
then let $\Psi(S, \beta)$ denote the following assertion: 
\begin{quote}
  \textbf{If} $\mc Z$ and $\langle \mc Y_i \mid i \in S \rangle$ are such that 
  \begin{enumerate}
    \item $\mc Z \subseteq \mc X$ and $\mc Y_i \subseteq \mc X_i$ for all $i \in S$;
    \item $|\mc Y_i| \leq \kappa_i^{+\varphi^\theta_\beta(i)}$ for all $i \in S$;
    \item for all $z \in \mc Z$, there is a club $C \subseteq \theta$ such that, for all 
    $i \in C \cap S$, there is $y \in \mc Y_i$ for which $\pi_i(z) \squig_i y$;
  \end{enumerate}
  \textbf{then} there is $\mc Y \subseteq \mc X$ such that $|\mc Y| \leq \kappa^{+\beta} + 
  d(\mathrm{NS}_\theta \restriction S) + e(S)$ and, for all $z \in \mc Z$, there is $y \in \mc Y$ 
  for which $z \squig y$.
\end{quote}

Let $\Psi^*(S, \beta)$ be defined in the same way, except, in the conclusion, we only require 
$|\mc Y| \leq \kappa^{+\beta} + e(S)$.

\begin{lemma} \label{framework_lemma}
  Suppose that $S \subseteq \theta$ is stationary and $\Psi(T, 0)$ holds for all stationary 
  $T \subseteq S$. Then, for all ordinals $\beta$ for which the canonical function 
  $\varphi^\theta_\beta$ is defined, $\Psi(S, \beta)$ holds.
\end{lemma}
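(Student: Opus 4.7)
The plan is to prove, by induction on $\beta$, the strengthened statement that $\Psi(T, \beta)$ holds for every stationary $T \subseteq S$. The base case $\beta = 0$ is the hypothesis of the lemma together with the monotonicity of $d(\mathrm{NS}_\theta \restriction \cdot)$ and $e$ under stationary subsets. For the inductive step, I fix $\mc Z$ and $\langle \mc Y_i \mid i \in T \rangle$ witnessing the hypotheses of $\Psi(T, \beta)$, enumerate each $\mc Y_i$ as $\{y_{i, \xi} \mid \xi < |\mc Y_i|\}$, and for each $z \in \mc Z$ select a club $C_z \subseteq \theta$ together with a function $\xi_z : C_z \cap T \to \On$ satisfying $\pi_i(z) \squig_i y_{i, \xi_z(i)}$ for all $i \in C_z \cap T$.

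For the successor step $\beta = \alpha + 1$, Proposition \ref{successor_limit_prop}(1) lets me assume, after removing a nonstationary set, that $\varphi^\theta_\beta(i) = \varphi^\theta_\alpha(i) + 1$ on $T$. Corollary \ref{scale_cor}, applied to $T$ and $\alpha$, then yields a stationary $T' \subseteq T$, a cardinal $\lambda \leq \kappa^{+\alpha+1}$, and a $<_{T'}$-cofinal sequence $\langle f_\gamma \mid \gamma < \lambda \rangle$ in $\prod_{i \in T'} \kappa_i^{+\varphi^\theta_\alpha(i)+1}$. For each $z \in \mc Z$, I extend $\xi_z$ arbitrarily to all of $T'$ and choose $\gamma(z) < \lambda$ with $\xi_z <_{T'} f_{\gamma(z)}$, then partition $\mc Z$ as $\bigcup_{\gamma < \lambda} \mc Z_\gamma$ according to the value of $\gamma(z)$. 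Setting $\mc Y^\gamma_i := \mc Y_i \cap \{y_{i, \xi} \mid \xi < f_\gamma(i)\}$ for $i \in T'$, I have $|\mc Y^\gamma_i| \leq \kappa_i^{+\varphi^\theta_\alpha(i)}$, so $(\mc Z_\gamma, \langle \mc Y^\gamma_i \mid i \in T' \rangle)$ witnesses the hypotheses of $\Psi(T', \alpha)$; the induction hypothesis then produces a dominating $\mc Y_\gamma \subseteq \mc X$ of size at most $\kappa^{+\alpha} + d(\mathrm{NS}_\theta \restriction T) + e(T)$, and $\mc Y := \bigcup_{\gamma < \lambda} \mc Y_\gamma$ has the required size since $\lambda \cdot (\kappa^{+\alpha} + d(\mathrm{NS}_\theta \restriction T) + e(T)) \leq \kappa^{+\beta} + d(\mathrm{NS}_\theta \restriction T) + e(T)$.

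For the limit case, Proposition \ref{successor_limit_prop}(2) lets me assume that $\varphi^\theta_\beta(i)$ is a limit ordinal for every $i \in T$. For each $z \in \mc Z$ and $i \in C_z \cap T$, I choose $\gamma_z(i) < \varphi^\theta_\beta(i)$ with $\xi_z(i) < \kappa_i^{+\gamma_z(i)}$, and set $\gamma_z(i) := \varphi^\theta_\beta(i)$ for $i \notin C_z \cap T$. Then $\{i < \theta \mid \gamma_z(i) < \varphi^\theta_\beta(i)\} = C_z \cap T$ is stationary, so Proposition \ref{prop_12} produces a stationary $S_z \subseteq C_z \cap T$ and an $\alpha_z < \beta$ with $\xi_z(i) < \kappa_i^{+\varphi^\theta_{\alpha_z + 1}(i)}$ for every $i \in S_z$. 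The main obstacle here is that a priori there is no cardinality bound on the collection $\{S_z \mid z \in \mc Z\}$, which prevents us from grouping the $z$'s directly by $S_z$; this is precisely the reason that $d(\mathrm{NS}_\theta \restriction S)$ must appear in the conclusion of $\Psi$. I circumvent this by fixing a family $\mc T$ of stationary subsets of $T$ of cardinality $d(\mathrm{NS}_\theta \restriction T)$ that is dense in $\mathrm{NS}_\theta^+ \restriction T$, and for each $z$ choosing $T_z \in \mc T$ with $T_z \setminus S_z$ nonstationary. I then partition $\mc Z = \bigcup_{(T^*, \alpha) \in \mc T \times \beta} \mc Z_{T^*, \alpha}$ by the values of $(T_z, \alpha_z)$; for each such pair, setting $\mc Y^{T^*, \alpha}_i := \mc Y_i \cap \{y_{i, \xi} \mid \xi < \kappa_i^{+\varphi^\theta_{\alpha+1}(i)}\}$ for $i \in T^*$ produces a witness to the hypotheses of $\Psi(T^*, \alpha+1)$, and the induction hypothesis yields a dominating set of size at most $D := \kappa^{+\beta} + d(\mathrm{NS}_\theta \restriction T) + e(T)$. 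The union over all $(T^*, \alpha)$ has size at most $|\mc T| \cdot |\beta| \cdot D = D$, completing the induction.
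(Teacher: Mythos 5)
Your proof is correct and follows essentially the same route as the paper: the successor case is handled via a short $<_{S'}$-cofinal family obtained from Corollary \ref{scale_cor}, and the limit case combines Proposition \ref{prop_12} with a dense family of stationary sets witnessing $d(\mathrm{NS}_\theta\restriction S)$. The only cosmetic difference is that in the limit step you invoke $\Psi(T^*,\alpha+1)$ where the paper invokes $\Psi(T,\alpha)$; since $\beta$ is a limit ordinal this does not affect the final cardinality bound.
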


\begin{proof}
  The proof is by induction on $\beta$, simultaneously for all stationary $S \subseteq \theta$. 
  Thus, fix an ordinal $\beta$ for which $\varphi^\theta_\beta$ is defined and a stationary set 
  $S \subseteq \theta$. By the hypothesis of the lemma, we can assume that $\beta > 0$, and by 
  the inductive hypothesis, we can assume that $\Psi(T, \alpha)$ holds for all $\alpha < \beta$ 
  and all stationary $T \subseteq S$. Fix $\mc Z$ and $\langle \mc Y_i \mid i \in S \rangle$ 
  as in the hypothesis of $\Psi(S, \beta)$; we will find $\mc Y \subseteq \mc X$ as in its conclusion. 
  For each $i \in S$, enumerate $\mc Y_i$ as $\langle y_{i, \xi} \mid \xi < 
  \kappa_i^{+\varphi^\theta_\beta(i)} \rangle$ (with repetitions if $|\mc Y_i| < 
  \kappa_i^{+\varphi^\theta_\beta(i)}$).
  
  Suppose first that $\beta = \beta' + 1$ is a successor ordinal. By Proposition 
  \ref{successor_limit_prop}, we can assume that $\varphi^\theta_\beta = \varphi^\theta_{\beta'}+1$. 
  Then, by Corollary \ref{scale_cor}, we can fix a stationary $S' \subseteq S$ and a $<_{S'}$-cofinal 
  family $\mc F \subseteq \prod_{i \in S'} \kappa_i^{+\varphi^\theta_\beta(i)}$ such that 
  $|\mc F| \leq \kappa^{+\beta}$. For each $f \in \mc F$, let 
  $\mc Y_{i,f} := \{y_{i, \xi} \mid \xi < f(i)\}$, and note that $|\mc Y_{i,f}| \leq \kappa_i^{+
  \varphi^\theta_{\beta'}(i)}$. Let $\mc Z_f$ be the set of $z \in \mc Z$ 
  for which there is a club $C \subseteq \theta$ such that, for all $i \in C \cap S'$, there is 
  $y \in \mc Y_{i,f}$ such that $\pi_i(z) \squig_i y$. Recalling that $d(\mathrm{NS}_\theta 
  \restriction S') \leq d(\mathrm{NS}_\theta \restriction S)$ and $e(S') \leq e(S)$,
  apply $\Psi(S', \beta')$ to $\mc Z_f$ and 
  $\langle \mc Y_{i,f} \mid i \in S' \rangle$ to find $\mc Y_f \subseteq \mc X$ such that 
  $|\mc Y_f| \leq \kappa^{+\beta'} + d(\mathrm{NS}_\theta \restriction S) + e(S)$ and, 
  for all $z \in \mc Z_f$, there is $y \in \mc Y_f$ such that $z \squig y$.
  
  Let $\mc Y = \bigcup_{f \in \mc F} \mc Y_f$; we claim that $\mc Y$ is as desired. 
  It is evident that $\mc Y \subseteq \mc X$ and $|\mc Y| \leq 
  \kappa^{+\beta} + d(\mathrm{NS}_\theta \restriction S) + e(S)$, so it remains to 
  verify that, for all $z \in \mc Z$, there is $y \in \mc Y$ such that $z \squig y$. For this, 
  it suffices to show that $\mc Z \subseteq \bigcup_{f \in \mc F} \mc Z_f$. To this end, fix 
  $z \in \mc Z$. 
  By assumption, there is a club $C \subseteq \theta$ such that, for all $i \in C \cap S'$, 
  there is $\xi_i < \kappa_i^{+\varphi^\theta_\beta(i)}$ for which $\pi_i(z) \squig_i y_{i, \xi_i}$. 
  Define a function $g \in \prod_{i \in S'} \kappa_i^{+\varphi^\theta_\beta(i)}$ by letting 
  \[
    g(i) = \begin{cases}
      \xi_i & \text{if } i \in C \\ 
      0 & \text{otherwise}
    \end{cases}
  \]
  for all $i \in S'$. Since $\mc F$ is $<_{S'}$-cofinal in $\prod_{i \in S'} 
  \kappa_i^{+\varphi^\theta_\beta(i)}$, we can find $f \in \mc F$ such that 
  $g <_{S'} f$, i.e., there is a club $D \subseteq \theta$ such that, for all $i \in D \cap S'$, 
  we have $g(i) < f(i)$. But then, for all $i \in D \cap C \cap S'$, we have 
  $\xi_i < f(i)$ and $\pi_i(z) \squig_i y_{i, \xi_i}$, so $D \cap C$ witnesses that $z$ is in 
  $\mc Z_f$, and we are done.
  
  Finally, suppose that $\beta$ is a limit ordinal. By Proposition \ref{successor_limit_prop}, we can 
  assume that $\varphi^\theta_\beta(i)$ is a limit ordinal for all $i \in S$.
  Let $\mc T$ be a collection of stationary subsets of $S$ such that $|\mc T| = 
  d(\mathrm{NS}_\theta \restriction S)$ and, for every stationary $S' \subseteq S$, there is 
  $T \in \mc T$ such that $T \setminus S' \in \mathrm{NS}_\theta$. For each $\alpha < \beta$ 
  and each $i \in S$, let $\mc Y^\alpha_i := \{y_{i, \xi} \mid \xi < 
  \kappa_i^{+\varphi^\theta_\alpha(i)}\}$. For each $\alpha < \beta$ and each $T \in \mc T$, let 
  $\mc Z_{T, \alpha}$ be the set of all $z \in \mc Z$ for which there is a club $C \subseteq \theta$ 
  such that, for all $i \in C \cap T$, there is $y \in \mc Y^\alpha_i$ such that $\pi_i(z) \squig_i 
  y$. Apply $\Psi(T, \alpha)$ to $\mc Z_{T, \alpha}$ and $\langle \mc Y^\alpha_i \mid i \in T \rangle$ 
  to find $\mc Y_{T, \alpha} \subseteq \mc X$ such that $|\mc Y_{T, \alpha}| \leq \kappa^{+\alpha} + 
  d(\mathrm{NS}_\theta \restriction S) + e(S)$ and, for all $z \in \mc Z_{T, \alpha}$, there is $y 
  \in \mc Y_{T, \alpha}$ such that $z \squig y$.
  
  Let $\mc Y = \bigcup \{\mc Y_{T, \alpha} \mid T \in \mc T, ~ \alpha < \beta\}$; we claim that 
  $\mc Y$ is as desired. 
  As in the successor case, it suffices to verify that $\mc Z \subseteq \bigcup \{\mc Z_{T, \alpha} 
  \mid T \in \mc T, ~ \alpha < \beta\}$. To this end, fix $z \in \mc Z$. By hypothesis, we can find a 
  club $C \subseteq \theta$ such that, for all $i \in C \cap S$, there is $\xi_i < 
  \kappa_i^{+\varphi^\theta_\beta(i)}$ for which $\pi_i(z) \squig_i y_{i, \xi_i}$. For each such $i$, 
  use the fact that $\varphi^\theta_\beta(i)$ is a limit ordinal to find $\gamma_i < 
  \varphi^\theta_\beta(i)$ such that $\xi_i < \kappa_i^{+\gamma_i}$. Define a function 
  $\psi \in {^\theta}\On$ by letting 
  \[
  	\psi(i) = \begin{cases}
  	  \gamma_i & \text{if } i \in C \cap S \\ 
  	  \varphi^\theta_\beta(i) & \text{otherwise}
  	\end{cases}
  \]
  for all $i < \theta$. By Proposition \ref{prop_12}, we can find a stationary $S' \subseteq C \cap S$ 
  and an $\alpha < \beta$ such that $\gamma_i = \psi(i) \leq \varphi^\theta_\alpha(i)$ for all 
  $i \in S'$. We can subsequently find a $T \in \mc T$ and a club $D \subseteq \theta$ such that 
  $D \cap T \subseteq S'$. Then, for all $i \in D \cap T$, we have $y_{i, \xi_i} \in \mc Y^\alpha_i$ 
  and $\pi_i(z) \squig_i y_{i, \xi_i}$, so $D$ witnesses that $z$ is in $\mc Z_{T, \alpha}$, and we 
  are done.
\end{proof}

Notice that the only place in which the value of $d(\mathrm{NS}_\theta \restriction S)$ 
plays a role in the proof of Lemma \ref{framework_lemma} is in the case in which $\beta$ is 
a limit ordinal (in the successor case it only makes an appearance via the inductive hypothesis). 
Therefore, if $\beta < \omega$ and $\Psi^*(T, 0)$ holds for all stationary $T \subseteq \theta$, 
then we can do away with $d(\mathrm{NS}_\theta \restriction S)$ in the conclusion of the lemma.
More precisely, the proof of the successor case of Lemma \ref{framework_lemma} yields 
the following corollary.

\begin{corollary} \label{framework_cor}
  Suppose that $S \subseteq \theta$ is stationary and $\Psi^*(T, 0)$ holds for all stationary 
  $T \subseteq S$. Then $\Psi^*(S, n)$ holds for all $n < \omega$.
\end{corollary}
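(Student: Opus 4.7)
The plan is to carry out a straightforward induction on $n < \omega$, reusing the successor case of the proof of Lemma \ref{framework_lemma} essentially verbatim, but with careful bookkeeping to see that the term $d(\mathrm{NS}_\theta \restriction S)$ never needs to appear in the cardinality bound. The base case $n = 0$ is precisely the hypothesis of the corollary, namely that $\Psi^*(S, 0)$ holds (since $S \subseteq S$).

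For the inductive step, assume $\Psi^*(T, n)$ holds for all stationary $T \subseteq S$, and fix $\mc Z$ and $\langle \mc Y_i \mid i \in S \rangle$ witnessing the hypothesis of $\Psi^*(S, n+1)$. I would replay the successor-case argument: by Proposition \ref{successor_limit_prop}, take $\varphi^\theta_{n+1} = \varphi^\theta_n + 1$, apply Corollary \ref{scale_cor} to obtain a stationary $S' \subseteq S$ and a $<_{S'}$-cofinal family $\mc F \subseteq \prod_{i \in S'} \kappa_i^{+\varphi^\theta_{n+1}(i)}$ with $|\mc F| \leq \kappa^{+n+1}$, and for each $f \in \mc F$ set $\mc Y_{i,f} := \{y_{i,\xi} \mid \xi < f(i)\}$ so that $|\mc Y_{i,f}| \leq \kappa_i^{+\varphi^\theta_n(i)}$. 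Then apply the inductive hypothesis $\Psi^*(S', n)$, which is available since $S' \subseteq S$, to the sets $\mc Z_f$ and $\langle \mc Y_{i,f} \mid i \in S' \rangle$ defined exactly as in the proof of Lemma \ref{framework_lemma}, producing $\mc Y_f \subseteq \mc X$ with $|\mc Y_f| \leq \kappa^{+n} + e(S') \leq \kappa^{+n} + e(S)$ and covering every $z \in \mc Z_f$.

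Setting $\mc Y := \bigcup_{f \in \mc F} \mc Y_f$, the cardinality bound is
\[
  |\mc Y| \leq |\mc F| \cdot \sup_{f \in \mc F} |\mc Y_f| \leq \kappa^{+n+1} \cdot (\kappa^{+n} + e(S)) = \kappa^{+n+1} + e(S),
\]
which is exactly the bound required by $\Psi^*(S, n+1)$. The verification that $\mc Z \subseteq \bigcup_{f \in \mc F} \mc Z_f$ is identical to the corresponding verification in Lemma \ref{framework_lemma} and uses only the $<_{S'}$-cofinality of $\mc F$, not any enumeration of stationary subsets of $S$.

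The only conceptual point to highlight is that the $d(\mathrm{NS}_\theta \restriction S)$ term appeared in Lemma \ref{framework_lemma} solely through the choice of the family $\mc T$ in the limit-case argument; the successor case does not invoke any such family. Since we are iterating the successor step finitely many times, no limit ordinal is ever encountered, and hence the density term never enters the bound. This is the main (and only) thing to verify, but it is immediate from inspection of the earlier proof, so no genuine obstacle arises.
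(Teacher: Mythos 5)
Your argument is correct and follows exactly the approach the paper has in mind: the paper itself simply remarks that the density term $d(\mathrm{NS}_\theta\restriction S)$ enters only in the limit case of Lemma~\ref{framework_lemma}, so iterating the successor case finitely often gives the stated bound, and you have spelled this out in the same way. One small expository point: since the successor step requires the inductive hypothesis on the \emph{stationary subset} $S'\subseteq S$ produced by Corollary~\ref{scale_cor}, the statement actually being proved by induction on $n$ should be ``$\Psi^*(T,n)$ holds for all stationary $T\subseteq S$'' (which is what you assume as inductive hypothesis) rather than just ``$\Psi^*(S,n)$''; your write-up assumes the stronger form but concludes only the weaker, so you should run the same successor argument with an arbitrary stationary $T_0\subseteq S$ in place of $S$ to close the loop.
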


The translation from Lemma \ref{framework_lemma} and Corollary \ref{framework_cor} to our main 
results will happen via the following corollary.

\begin{corollary} \label{main_framework_cor}
  Suppose that 
  \begin{enumerate}
    \item $\beta$ is an ordinal for which $\varphi^\theta_\beta$ is defined;
    \item $S := \{i < \theta \mid \cf(X_i, \squig_i) \leq \kappa_i^{+\varphi^\theta_\beta(i)}\}$ is 
    stationary in $\theta$;
    \item $\Psi(T, 0)$ holds for all stationary $T \subseteq S$.
  \end{enumerate}
  Then $\cf(X, \squig) \leq \kappa^{+\beta} + d(\mathrm{NS}_\theta \restriction S) + e(S)$. 
  If, moreover, $\beta < \omega$ and $\Psi^*(T, 0)$ holds for every stationary $T \subseteq S$, 
  then $\cf(X, \squig) \leq \kappa^{+\beta} + e(S)$.
\end{corollary}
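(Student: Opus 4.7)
The plan is to apply Lemma \ref{framework_lemma} (respectively Corollary \ref{framework_cor}) to the most natural instance of $\Psi(S,\beta)$ (respectively $\Psi^*(S,\beta)$) that arises from the cofinality assumption on each $\mc X_i$. For each $i \in S$, hypothesis (2) gives $\cf(\mc X_i, \squig_i) \leq \kappa_i^{+\varphi^\theta_\beta(i)}$, so fix a $\squig_i$-cofinal subset $\mc Y_i \subseteq \mc X_i$ of cardinality at most $\kappa_i^{+\varphi^\theta_\beta(i)}$. Take $\mc Z := \mc X$. Then conditions (1) and (2) of $\Psi(S,\beta)$ hold by construction, and (3) is witnessed by the trivial club $C = \theta$: for any $z \in \mc Z$ and any $i \in S$, the point $\pi_i(z)$ lies in $\mc X_i$, so by $\squig_i$-cofinality of $\mc Y_i$ there is some $y \in \mc Y_i$ with $\pi_i(z) \squig_i y$.

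Hypothesis (3) of the corollary supplies $\Psi(T, 0)$ for every stationary $T \subseteq S$, so Lemma \ref{framework_lemma} delivers $\Psi(S, \beta)$. Feeding our instance $(\mc Z, \langle \mc Y_i \mid i \in S\rangle)$ into the conclusion of $\Psi(S, \beta)$ produces a set $\mc Y \subseteq \mc X$ of cardinality at most $\kappa^{+\beta} + d(\mathrm{NS}_\theta \restriction S) + e(S)$ such that every $z \in \mc X$ has some $y \in \mc Y$ with $z \squig y$. This is exactly the witness that $\cf(\mc X, \squig) \leq \kappa^{+\beta} + d(\mathrm{NS}_\theta \restriction S) + e(S)$, giving the first conclusion.

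For the refinement, assume $\beta < \omega$ and $\Psi^*(T, 0)$ for all stationary $T \subseteq S$. The very same setup $(\mc Z, \langle \mc Y_i \mid i \in S\rangle)$ works, but one now invokes Corollary \ref{framework_cor} in place of Lemma \ref{framework_lemma} to obtain $\Psi^*(S, \beta)$; its conclusion omits the $d(\mathrm{NS}_\theta \restriction S)$ summand, yielding $\cf(\mc X, \squig) \leq \kappa^{+\beta} + e(S)$. There is essentially no obstacle here: the corollary is a direct specialization of the preceding lemma to the canonical scenario in which the local cofinalities are witnessed by the $\mc Y_i$'s and the projections $\pi_i$ feed points from $\mc X$ into the local structures.
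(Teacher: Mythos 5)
Your proof is correct and follows essentially the same route as the paper: take $\mc Z = \mc X$, witness each local cofinality by a $\squig_i$-cofinal $\mc Y_i \subseteq \mc X_i$ of size at most $\kappa_i^{+\varphi^\theta_\beta(i)}$, observe that the trivial club $C = \theta$ witnesses clause (3) of $\Psi(S,\beta)$, and then invoke Lemma \ref{framework_lemma} (respectively Corollary \ref{framework_cor} in the $\beta < \omega$ case) to extract the desired global $\squig$-cofinal family $\mc Y$.
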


\begin{proof}
  By Lemma \ref{framework_lemma}, we know that $\Psi(S, \beta)$ holds. Let $\mc Z = \mc X$ and, for 
  each $i \in S$, let $\mc Y_i \subseteq \mc X_i$ be such that $|\mc Y_i| \leq \kappa_i^{+
  \varphi^\theta_\beta(i)}$ and $\mc Y_i$ is $\squig_i$-cofinal in $\mc X_i$. In particular, for 
  all $z \in \mc Z$ and all $i \in S$, there is $y \in \mc Y_i$ such that $\pi_i(z) \squig_i y$. 
  Therefore, applying $\Psi(S, \beta)$ to $\mc Z$ and $\langle \mc Y_i \mid i \in S \rangle$ yields 
  a set $\mc Y \subseteq \mc X$ such that $|\mc Y| \leq \kappa^{+\beta} + d(\mathrm{NS}_\theta 
  \restriction S) + e(S)$ and, for all $z \in Z$, there is $y \in \mc Y$ such that $z \squig y$, i.e., 
  $\mc Y$ is $\squig$-cofinal in $\mc X$.
  
  For the ``moreover" clause, if $\beta < \omega$ and $\Psi^*(T, 0)$ holds for every stationary 
  $T \subseteq S$, then Corollary \ref{framework_cor} implies that $\Psi^*(S, \beta)$ holds. 
  Applying $\Psi^*(S, \beta)$ to the $\mc Z$ and $\langle \mc Y_i \mid i \in S \rangle$ of the 
  previous paragraph then yields $\mc Y \subseteq \mc X$ such that $|\mc Y| \leq \kappa^{+\beta} + e(S)$ 
  and $\mc Y$ is $\squig$-cofinal in $\mc X$.
\end{proof}

\section{Specific instances} \label{instance_section}

We now turn to applications of the general framework introduced in the previous section to particular 
cardinal characteristics at singular cardinals. Let us fix for this entire section cardinals $\kappa$ 
and $\theta$ such that $\omega < \theta = \cf(\kappa) < \kappa$, as well as an increasing, continuous 
sequence of cardinals $\langle \kappa_i \mid i < \theta \rangle$ converging to $\kappa$, with 
$\kappa_0 > \theta$.

Each cardinal characteristic $\mathfrak{cc}_\kappa$ we consider will entail a choice of a set $\mc X$ 
and a binary relation $\squig$ on $\mc X$ such that $\mathfrak{cc}_\kappa = \cf(\mc X, \squig)$. The 
sets $\langle \mc X_i \mid i < \theta \rangle$ and relations $\langle \squig_i \mid i < \theta 
\rangle$ will be defined analogously, so that $\mathfrak{cc}_{\kappa_i} = \cf(\mc X_i, \squig_i)$ for 
each $i < \theta$. We will also have natural restriction operations $\pi_i:\mc X \rightarrow \mc X_i$ 
for each $i < \theta$. Together with an appropriate choice of function $e:
\mathrm{NS}_\theta^+ \rightarrow \mathrm{Card}$, these assignments give rise to instances 
 of the formulas $\Psi(S, \beta)$ and $\Psi^*(S, \beta)$ from 
the previous section. The primary work of this section will consist of proving that all of these instances of 
$\Psi^*(T, 0)$ hold; Corollary \ref{main_framework_cor}
will then directly yield our main results.

We will begin by introducing each of the cardinal characteristics we will be considering 
and specifying the appropriate assignments for $\mc X$, $\squig$, $\langle (\mc X_i, 
\squig_i, \pi_i) \mid i < \theta \rangle$, and $e$ for each characteristic. We will then 
prove a lemma indicating that, for all of these assignments, the corresponding instance 
of $\Psi^*(T, 0)$ holds for every stationary $T \subseteq \theta$.

We note that for some of the cardinal characteristics, we will only define $\mc X_i$, $\squig_i$, and 
$\pi_i$ for limit ordinals $i < \theta$. Since the clubs $C$ in the statement of 
$\Psi(S, \beta)$ can always be assumed to consist entirely of limit ordinals, this is sufficient for 
our purposes.

\subsection{Meeting numbers}

The first cardinal characteristics we consider are the \emph{meeting numbers}.

\begin{definition} [\cite{matet_towers_and_clubs}]
  Suppose that $\sigma \leq \lambda$ are infinite cardinals. Then the \emph{meeting number} 
  $m(\sigma, \lambda)$ is the minimal cardinality of a collection $\mc Y \subseteq [\lambda]^\sigma$ 
  such that, for all $x \in [\lambda]^\sigma$, there is $y \in \mc Y$ such that $|x \cap y| = \sigma$.
\end{definition}

The meeting number $m(\sigma, \lambda)$ is of special interest when $\cf(\sigma) = \cf(\lambda)$, 
in which case a routine diagonalization argument implies that $m(\sigma, \lambda) > \lambda$. A 
result of Matet indicates that Shelah's Strong Hypothesis, a statement in PCF 
theory, is equivalent to the statement that all such meeting numbers take their minimal possible value:

\begin{theorem}[Matet, {\cite[Theorem 1.1]{matet_meeting_numbers}}]
  The following are equivalent:
  \begin{enumerate}
    \item Shelah's Strong Hypothesis;
    \item for every singular cardinal $\lambda$ of countable cofinality, $m(\aleph_0, \lambda) 
    = \lambda^+$;
    \item for all infinite cardinals $\sigma < \lambda$, we have $m(\sigma, \lambda) = \lambda^+$ 
    if $\cf(\sigma) = \cf(\lambda)$ and $m(\sigma, \lambda) = \lambda$ if $\cf(\sigma) \neq 
    \cf(\lambda)$.
  \end{enumerate}
\end{theorem}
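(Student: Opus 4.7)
The plan is to establish the cyclic chain $(3) \Rightarrow (2) \Rightarrow (1) \Rightarrow (3)$. The implication $(3) \Rightarrow (2)$ is immediate: if $\lambda$ is singular of countable cofinality, taking $\sigma = \aleph_0$ gives $\cf(\sigma) = \cf(\lambda)$, so (3) yields $m(\aleph_0, \lambda) = \lambda^+$.

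For $(1) \Rightarrow (3)$, the lower bounds are independent of SSH: $m(\sigma, \lambda) \geq \lambda$ is easy, and when $\cf(\sigma) = \cf(\lambda)$ a standard diagonalization shows $m(\sigma, \lambda) > \lambda$, since given any candidate family $\langle y_\alpha \mid \alpha < \lambda \rangle \subseteq [\lambda]^\sigma$ one can build $x \in [\lambda]^\sigma$ meeting each $y_\alpha$ in fewer than $\sigma$ many points. For the upper bounds, the plan is to translate the question into a PCF-theoretic covering statement: one shows $m(\sigma, \lambda) \leq \mathrm{cov}(\lambda, \sigma^+, \cf(\sigma)^+, 2) + \lambda$ by refining any witness of the covering number so that each piece has size exactly $\sigma$ and meets every $x \in [\lambda]^\sigma$ in $\sigma$ many points. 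Under SSH, this covering number equals $\lambda$ when $\cf(\sigma) \neq \cf(\lambda)$ and is at most $\lambda^+$ when the cofinalities agree, giving the required upper bounds.

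The subtle direction is $(2) \Rightarrow (1)$. The first ingredient is Shelah's theorem that SSH is equivalent to $\mathrm{pp}(\lambda) = \lambda^+$ for all singular $\lambda$ with $\cf(\lambda) = \omega$, so it suffices to derive the latter from (2). The second ingredient is the inequality $\mathrm{pp}(\lambda) \leq m(\aleph_0, \lambda)$ for such $\lambda$, established by a scale diagonalization: fix a cofinal $\omega$-sequence $\langle \lambda_n \mid n < \omega \rangle$ in $\lambda$ and a scale in $\prod_n \lambda_n^+$ of length $\mathrm{pp}(\lambda)$; given any meeting family $\mc Y \subseteq [\lambda]^{\aleph_0}$ of size strictly less than $\mathrm{pp}(\lambda)$, one codes each $y \in \mc Y$ as a function in $\prod_n \lambda_n^+$, produces a scale function strictly dominating all of them modulo finite, and uses it to construct an element of $[\lambda]^{\aleph_0}$ that meets no member of $\mc Y$ in an infinite set. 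The main obstacle is the PCF bookkeeping in this last step---faithfully encoding infinite meetings of $\omega$-sets in terms of $<^*$-domination of scale functions---together with a careful invocation of Shelah's reduction to countable cofinality in the precise form required here.
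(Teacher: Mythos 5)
This theorem is stated in the paper purely as a cited result (Matet, \cite[Theorem 1.1]{matet_meeting_numbers}); the paper supplies no proof of it, so there is no in-paper argument against which to compare your attempt. Your high-level decomposition is sensible: $(3)\Rightarrow(2)$ is immediate, $(1)\Rightarrow(3)$ via covering numbers and $(2)\Rightarrow(1)$ via pseudopowers are the expected PCF-theoretic ingredients, and the reduction of Shelah's Strong Hypothesis to the countable-cofinality case is indeed Shelah's theorem.

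Two steps need more care. For $(1)\Rightarrow(3)$, the asserted bound $m(\sigma,\lambda)\leq\mathrm{cov}(\lambda,\sigma^+,\cf(\sigma)^+,2)+\lambda$ is not justified by your sketch when $\sigma$ is singular: a covering family whose members each contain every prescribed set of size $\leq\cf(\sigma)$ produces, for $x\in[\lambda]^\sigma$, only a $y$ with $|x\cap y|\geq\cf(\sigma)$, and padding $y$ up to size $\sigma$ with arbitrary points does not yield $|x\cap y|=\sigma$. You likely want $\mathrm{cov}(\lambda,\sigma^+,\sigma^+,2)$ here, together with an argument (presumably from Shelah's Strong Hypothesis) that it takes the desired value, or a reduction of the singular-$\sigma$ case to the regular one. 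For $(2)\Rightarrow(1)$, the inequality $\mathrm{pp}(\lambda)\leq m(\aleph_0,\lambda)$ for singular $\lambda$ of countable cofinality is the real content; the scale-diagonalization you describe is a reasonable line of attack, but the bookkeeping you defer is exactly where the work lies (handling $y$'s that are bounded in $\lambda$, the case $\sup(y\cap\lambda_n)=\lambda_n$ when $\cf(\lambda_n)=\omega$, and whether $\mathrm{pp}(\lambda)$ is attained by a single scale rather than merely being a supremum), and you acknowledge this. As written, this is a plausible outline rather than a proof, and I cannot certify it without those details being supplied.
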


We now specify assignments to define a version of the formula $\Psi^*(T, 0)$ appropriate for 
the meeting number. Let $\mc X = [\kappa]^\theta$ and, for each limit ordinal $i < \theta$, let 
$\mc X_i = [\kappa_i]^{\cf(i)}$. Define relations $\squig$ on $\mc X$ and $\squig_i$ on $\mc X_i$ by 
letting $x \squig y$ iff $|x \cap y| = \theta$ and $x \squig_i y$ iff $|x_i \cap y_i| = \cf(i)$ 
for all limit ordinals $i < \theta$. It is then evident that $m(\theta, \kappa) = 
\cf(\mc X, \squig)$ and $m(\cf(i), \kappa_i) = m(\cf(\kappa_i), \kappa_i) = \cf(\mc X_i, \squig_i)$ 
for all limit $i < \theta$. 

For each limit ordinal $i < \theta$, define a map $\pi_i:\mc X \rightarrow \mc X_i$ as 
follows. For any $i < \theta$ and $x \in \mc X$, if $\sup(x \cap \kappa_i) = \kappa_i$, then let 
$\pi_i(x)$ be an arbitrary unbounded subset of $x \cap \kappa_i$ of order type $\cf(i)$. 
If $\sup(x \cap \kappa_i) < \kappa_i$, then simply let $\pi_i(x)$ be an arbitrary element of 
$\mc X_i$. Note that, for every $x \in \mc X$ that is unbounded in $\kappa$, the set 
$\{i < \theta \mid \sup(x \cap \kappa_i) = \kappa_i\}$ is a club in $\theta$.

Let $e: \mathrm{NS}_\theta^+ \rightarrow \mathrm{Card}$ be 
the constant function taking value $\sum_{j < \theta} m(\theta, \kappa_j)$.

\subsection{Density}

As mentioned above, a version of Silver's theorem for density is proven in \cite{kojman_density}. 
We include density here for completeness, since our results are slightly more general than those 
of \cite{kojman_density}. 

We are interested in particular in the number $d(\theta, \kappa)$.
The setup for density will be similar to that for the meeting number. Again let $\mc X = [\kappa]^\theta$ 
and, for each limit ordinal $i < \theta$, let $\mc X_i = [\kappa_i]^{\mathrm{cf}(i)}$. Define 
relations $\squig$ on $\mc X$ and $\squig_i$ on $\mc X_i$ by letting $x \squig y$ or $x \squig_i y$ 
iff $x \supseteq y$. Then $d(\theta, \kappa) = \cf(\mc X, \squig)$ and $d(\cf(\kappa_i), \kappa_i) 
= \cf(\mc X_i, \squig_i)$ for all limit $i < \theta$. 

Define maps $\pi_i:\mc X \rightarrow \mc X_i$ for limit ordinals $i < \theta$ exactly as in the 
case of the meeting number in the previous subsection. Let $e : \mathrm{NS}^+_\theta \rightarrow 
\mathrm{Card}$ be the constant function taking value 
$\underline{d(\theta, \kappa)} = \sum_{j < \theta} d(\theta, \kappa_j)$ 
(recall Definition \ref{density_def}).

\subsection{The reaping number}

\begin{definition}
  Let $\lambda$ be an infinite cardinal.
  \begin{enumerate}
    \item If $x,y \in [\lambda]^\lambda$, then we say that $x$ \emph{splits} $y$ 
    if $|y \cap x| = |y \setminus x| = \lambda$.
    \item A family $\mc Y \subseteq [\lambda]^\lambda$ is \emph{unreaped} if there is 
    no single $x \in [\lambda]^\lambda$ that splits every element of $\mc Y$.
    \item The reaping number $\mathfrak{r}_\lambda$ is the minimum cardinality of 
    an unreaped family in $[\lambda]^\lambda$.
  \end{enumerate}
\end{definition}

A standard diagonalization argument shows that $\mathfrak{r}_\lambda > \lambda$ for every 
infinite cardinal $\lambda$.

Let $\mc X = [\kappa]^\kappa$ and, for each $i < \theta$, let $\mc X_i = [\kappa_i]^{\kappa_i}$. 
Define a relation $\squig$ on $\mc X$ by letting $x \squig y$ iff $x$ does not split $y$, i.e., 
either $|y \cap x| < \kappa$ or $|y \setminus x| < \kappa$. Similarly, for each $i < \theta$, 
define $\squig_i$ on $\mc X_i$ by letting $x \squig_i y$ iff $x$ does not split $y$. Then it is 
evident that $\mathfrak{r}_\kappa = \cf(\mc X, \squig)$ and $\mathfrak{r}_{\kappa_i} = 
\cf(\mc X_i, \squig_i)$ for all $i < \theta$.

For each $i < \theta$, define a map $\pi_i : \mc X \rightarrow \mc X_i$ as follows. For all 
$x \in \mc X$, if $|x \cap \kappa_i| = \kappa_i$, then let $\pi_i(x) = x \cap \kappa_i$. Otherwise, 
let $\pi_i(x) = \kappa_i$. Note that, for all $x \in [\kappa]^\kappa$, the set of $i < \theta$ 
for which $|x \cap \kappa_i| = \kappa_i$, and hence for which $\pi_i(x) = x \cap \kappa_i$, is a 
club in $\theta$. Finally, as in the case of density, let $e : \mathrm{NS}_\theta^+ \rightarrow 
\mathrm{Card}$ be the constant function taking value $\underline{d(\theta, \kappa)}$.

\subsection{The dominating number}

\begin{definition}
  Suppose that $\lambda$ is an infinite cardinal.
  \begin{enumerate}
    \item If $f, g \in {^\lambda}\mathrm{On}$, then $f <^* g$ if and only if 
    $|\{\eta < \lambda \mid g(\eta) \leq f(\eta)\}| < \lambda$.
    \item The \emph{dominating number} $\mathfrak{d}_\lambda$ is the minimal 
    cardinality of a family $\mc F \subseteq {^\lambda}\lambda$ such that 
    for every $g \in {^\lambda}\lambda$, there is $f \in \mc F$ such that $g <^* f$.
    \item More generally, for any limit ordinal $\sigma$, $\mathfrak{d}_{\lambda, 
    \sigma}$ is the minimal cardinality of a family $\mc F \subseteq {^\lambda}\sigma$ 
    such that, for every $g \in {^\lambda}\sigma$, there is $f \in \mc F$ such that 
    $g <^* f$.
  \end{enumerate}
\end{definition}

Proofs of the following basic facts can be found in \cite{garti_shelah_generalized}.

\begin{proposition}[{\cite[Claim 3.2 and Lemma 3.1]{garti_shelah_generalized}}] \label{d_prop}
  \label{garti_shelah_prop}
  Suppose that $\lambda$ is an infintie cardinal.
  \begin{enumerate}
    \item $\mathfrak{d}_\lambda > \lambda$ and $\cf(\mathfrak{d}_\lambda) > \lambda$.
    \item $\mathfrak{d}_\lambda = \mathfrak{d}_{\lambda, \cf(\lambda)}$.
  \end{enumerate}
\end{proposition}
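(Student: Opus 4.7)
My plan is to handle part (2) first, since it does not require (1), and then to invoke it in part (1) to reduce the problem to a diagonalization inside ${^\lambda}\cf(\lambda)$.

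For part (2), I fix once and for all a strictly increasing cofinal sequence $\langle \lambda_\beta : \beta < \cf(\lambda)\rangle$ in $\lambda$ (trivial when $\lambda$ is regular) and translate between ${^\lambda}\lambda$ and ${^\lambda}\cf(\lambda)$ via compressions $\bar f(\eta) := \min\{\beta < \cf(\lambda) : f(\eta) < \lambda_\beta\}$ of $f \in {^\lambda}\lambda$ and expansions $\hat g(\eta) := \lambda_{g(\eta)}$ of $g \in {^\lambda}\cf(\lambda)$. Monotonicity of $\langle \lambda_\beta \rangle$ then gives $\hat g <^* f \Longleftrightarrow g <^* \bar f$ modulo a set of cardinality $< \lambda$, and transporting a $<^*$-dominating family in either direction produces a dominating family of the same cardinality in the other, yielding $\mathfrak{d}_\lambda = \mathfrak{d}_{\lambda, \cf(\lambda)}$.

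For part (1), by (2) I may work entirely inside ${^\lambda}\cf(\lambda)$. For $\mathfrak{d}_\lambda > \lambda$: given any $\{f_\alpha : \alpha < \lambda\} \subseteq {^\lambda}\cf(\lambda)$, fix a surjection $e : \lambda \to \lambda$ with every fiber of cardinality $\lambda$ and set $g(\eta) := f_{e(\eta)}(\eta) + 1$; this lies in ${^\lambda}\cf(\lambda)$ because $\cf(\lambda)$ is a limit ordinal, and each fiber $e^{-1}(\alpha)$ is a size-$\lambda$ witness that $g \not<^* f_\alpha$. For $\cf(\mathfrak{d}_\lambda) > \lambda$: assume toward contradiction that $\sigma := \cf(\mathfrak{d}_\lambda) \leq \lambda$, write $\mathfrak{d}_\lambda = \sup_{\alpha < \sigma} \mu_\alpha$ with $\mu_\alpha$ strictly increasing, fix a dominating $\mc F = \{f_\xi : \xi < \mathfrak{d}_\lambda\}$, and pick $g_\alpha \in {^\lambda}\cf(\lambda)$ not $<^*$-dominated by any $f_\xi$ with $\xi < \mu_\alpha$. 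It then suffices to produce a single $g \in {^\lambda}\cf(\lambda)$ with $g_\alpha \leq^* g$ for every $\alpha < \sigma$: any dominator $f_\xi \in \mc F$ of $g$ would then dominate every $g_\alpha$ at once, contradicting the defining property of $g_\alpha$ for any $\alpha$ with $\xi < \mu_\alpha$.

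The main obstacle is the construction of this majorant $g$. For regular $\lambda$, or more generally whenever $\sigma < \cf(\lambda)$, the pointwise supremum $g(\eta) := \sup_{\alpha < \sigma}(g_\alpha(\eta)+1)$ stays below $\cf(\lambda)$ by regularity of $\cf(\lambda)$ and settles the matter immediately. For singular $\lambda$ with $\sigma \geq \cf(\lambda)$, the naive supremum can escape $\cf(\lambda)$, and one must proceed piecewise along the cofinal sequence $\langle \lambda_\beta \rangle$ of part (2): choose an increasing exhaustion $\sigma = \bigcup_{\beta < \cf(\lambda)} T_\beta$ by sets of size $< \cf(\lambda)$ (possible directly when $\sigma \leq \cf(\lambda)$; the residual case $\cf(\lambda) < \sigma \leq \lambda$ is handled by a scale-theoretic argument in the spirit of Theorem \ref{jech_thm_1}) and define $g$ on $[\lambda_\beta, \lambda_{\beta+1})$ as $\sup_{\alpha \in T_\beta}(g_\alpha(\cdot) + 1)$. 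Letting $\beta_\alpha$ be the least $\beta$ with $\alpha \in T_\beta$, the set where $g_\alpha \leq g$ fails then sits inside the initial segment $[0, \lambda_{\beta_\alpha})$ of cardinality $\lambda_{\beta_\alpha} < \lambda$, which yields $g_\alpha \leq^* g$ and completes the contradiction. All remaining verifications are routine monotonicity and cardinal arithmetic checks.
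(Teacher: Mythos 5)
The paper does not actually prove this proposition; it is cited verbatim from Garti--Shelah \cite{garti_shelah_generalized} with the remark ``Proofs of the following basic facts can be found in \cite{garti_shelah_generalized}.'' So there is no in-paper argument to compare against, and I will assess your proposal on its own terms. Your proof of part (2) via the compression/expansion dictionary between ${^\lambda}\lambda$ and ${^\lambda}\cf(\lambda)$ is correct, and your diagonalization for $\mathfrak{d}_\lambda > \lambda$ (using a surjection $e:\lambda\to\lambda$ with all fibers of size $\lambda$) is also correct.

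The argument for $\cf(\mathfrak{d}_\lambda) > \lambda$, however, has a genuine gap precisely in what you call the ``residual case.'' When $\lambda$ is singular, $\sigma := \cf(\mathfrak{d}_\lambda)$ is regular and cannot equal $\lambda$, so the residual case is $\cf(\lambda) < \sigma < \lambda$, and this range is \emph{always} nonempty for singular $\lambda$ (e.g.\ $\sigma = \cf(\lambda)^+$). In that range the exhaustion $\sigma = \bigcup_{\beta<\cf(\lambda)} T_\beta$ you call for simply does not exist: $\sigma$ is regular and $> \cf(\lambda)$, so it is not a union of $\cf(\lambda)$-many sets each of size $< \cf(\lambda)$. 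Worse, the piecewise-supremum template itself provably cannot be repaired within this shape: if one assigns to each $\eta$ a set $A_\eta \in [\sigma]^{<\cf(\lambda)}$ and defines $g$ on $\eta$ as a sup over $A_\eta$, then requiring $\{\eta : \alpha \notin A_\eta\}$ to be of size $< \lambda$ for every $\alpha < \sigma$ forces, by Fodor-style pigeonhole on the bounding indices $\beta(\alpha)$, $\sigma$-many $\alpha$'s to lie in a single $A_\eta$ of size $< \cf(\lambda)$, a contradiction. So the majorant $g$ with $g_\alpha \leq^* g$ for all $\alpha < \sigma$ is exactly the assertion that $\sigma$-many functions in $({^\lambda}\cf(\lambda), \leq^*)$ have a common upper bound, i.e.\ a lower bound on the corresponding bounding number, and this is not a triviality for singular $\lambda$ (indeed, the introduction of this very paper emphasizes that the generalized bounding number at singular $\kappa$ can be as small as $\mathfrak b_{\cf(\kappa)} < \kappa$, so one must be careful about which version is $> \lambda$). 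Gesturing at ``a scale-theoretic argument in the spirit of Theorem \ref{jech_thm_1}'' names a toolbox but supplies no argument; this is the missing idea, and it is the heart of the singular case, not a routine verification.
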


The proofs of \cite[Claim 3.2 and Lemma 3.1]{garti_shelah_generalized} can be 
routinely adapted to yield the following generalization of the preceding proposition.

\begin{fact} \label{d_fact}
  Suppose that $\lambda$ is an infinite cardinal and $\sigma$ is a limit ordinal. 
  \begin{enumerate}
    \item $\mathfrak{d}_{\lambda, \sigma} > \lambda$ and $\cf(\mathfrak{d}_{\lambda, \sigma})
    > \lambda$.
    \item $\mathfrak{d}_{\lambda, \sigma} = \mathfrak{d}_{\lambda, \cf(\sigma)}$.
  \end{enumerate}  
\end{fact}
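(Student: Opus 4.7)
The plan is to mimic the proofs of Proposition \ref{d_prop} from \cite{garti_shelah_generalized} almost verbatim, allowing the codomain to be an arbitrary limit ordinal $\sigma$ rather than $\lambda$. I would prove (2) first, since it reduces (1) to the case of regular $\sigma$.

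For (2), fix a strictly increasing, continuous, cofinal sequence $\langle \sigma_j \mid j < \cf(\sigma) \rangle$ in $\sigma$. Set up the two obvious translations between ${^\lambda}\sigma$ and ${^\lambda}\cf(\sigma)$: for $f \in {^\lambda}\sigma$, let $\widehat f(\eta) := \min\{j < \cf(\sigma) : f(\eta) < \sigma_j\}$, and for $g \in {^\lambda}\cf(\sigma)$, let $\widetilde g(\eta) := \sigma_{g(\eta)}$. A short verification shows that $\mc F \subseteq {^\lambda}\sigma$ is dominating iff $\{\widehat f : f \in \mc F\}$ is dominating in ${^\lambda}\cf(\sigma)$, and symmetrically: given $g$ and an $f \in \mc F$ dominating $\widetilde g$, at every $\eta$ with $f(\eta) > \widetilde g(\eta)$ one has $\widehat f(\eta) > g(\eta)$. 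This yields $\mathfrak{d}_{\lambda, \sigma} = \mathfrak{d}_{\lambda, \cf(\sigma)}$.

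For (1), appeal to (2) to assume $\sigma$ is a regular infinite cardinal, and split into cases. If $\sigma > \lambda$, then any $\mc F \subseteq {^\lambda}\sigma$ with $|\mc F| \leq \lambda$ has all its values bounded by some $\gamma < \sigma$ (since there are only $\lambda < \cf(\sigma)$ many), and the constant function $g \equiv \gamma$ is not dominated; the cofinality statement follows by taking pointwise suprema of witnesses $g_\alpha$ drawn from an assumed decomposition $\mc F = \bigcup_{\alpha < \mu} \mc F_\alpha$ with $\mu \leq \lambda$ and each $|\mc F_\alpha| < \mathfrak{d}_{\lambda, \sigma}$. If $\sigma \leq \lambda$, fix a partition $\lambda = \bigsqcup_{\alpha < \lambda} P_\alpha$ into pieces of size $\lambda$, and, given a candidate family enumerated as $\{f_\alpha : \alpha < \lambda\}$, define $g$ by $g(\eta) := f_\alpha(\eta)$ for $\eta \in P_\alpha$; then $P_\alpha \subseteq \{\eta : f_\alpha(\eta) \leq g(\eta)\}$ has size $\lambda$, so $g$ is not dominated by any $f_\alpha$. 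The analogous partition argument, this time indexing pieces by $\mu \leq \lambda$ and using bijections $P_\alpha \cong \lambda$ to pull back witnesses $h_\alpha \in {^\lambda}\sigma$ not dominated by $\mc F_\alpha$, yields $\cf(\mathfrak{d}_{\lambda, \sigma}) > \lambda$.

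The main obstacle, such as it is, is simply to verify that the combination step never produces ordinals outside $\sigma$: in the case $\sigma > \lambda$, regularity keeps suprema of $\leq \lambda$ many ordinals strictly below $\sigma$; in the case $\sigma \leq \lambda$, the partition construction avoids suprema entirely and every output value of $g$ is already a value of some $f \in \mc F$, hence in $\sigma$. No ideas beyond those of \cite{garti_shelah_generalized} are required.
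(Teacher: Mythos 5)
Your approach is the one the paper intends, namely a routine adaptation of the Garti--Shelah proofs, and part (2) together with the $\sigma>\lambda$ case of (1) are fine as written; the reduction of (1) to regular $\sigma$ via (2) and the bookkeeping that keeps all values inside $\sigma$ are handled correctly.

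The one place that needs to be stated more carefully is the cofinality argument when $\sigma\leq\lambda$. You speak of bijections $P_\alpha\cong\lambda$ and of pulling back ``witnesses $h_\alpha\in{^\lambda}\sigma$ not dominated by $\mc F_\alpha$.'' Read literally, this does not work: if $h_\alpha$ is chosen not dominated by the original family $\mc F_\alpha$ and is then transported onto $P_\alpha$ via a bijection $\pi_\alpha:\lambda\to P_\alpha$, the set $\{\eta\in P_\alpha: f(\eta)\leq g(\eta)\}$ corresponds under $\pi_\alpha$ to $\{\zeta<\lambda: f(\pi_\alpha(\zeta))\leq h_\alpha(\zeta)\}$, whereas what you know has size $\lambda$ is $\{\zeta<\lambda: f(\zeta)\leq h_\alpha(\zeta)\}$, and these two sets are unrelated. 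The correct move is to choose $h_\alpha$ not to be dominated by the \emph{re-indexed} family $\{f\circ\pi_\alpha: f\in\mc F_\alpha\}$ (still of size $<\mf d_{\lambda,\sigma}$, so such an $h_\alpha$ exists), and then set $g(\eta):=h_\alpha(\pi_\alpha^{-1}(\eta))$ for $\eta\in P_\alpha$; then $\{\eta\in P_\alpha: f(\eta)\leq g(\eta)\}$ is exactly the $\pi_\alpha$-image of $\{\zeta: f(\pi_\alpha(\zeta))\leq h_\alpha(\zeta)\}$, which has size $\lambda$ by the choice of $h_\alpha$. With that emendation your proof is complete. (A minor point: continuity of the cofinal sequence in (2) is never used; strictly increasing and cofinal suffices.)
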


Recently, Shelah proved that, if $\kappa$ is a singular strong limit cardinal, 
then $\mathfrak{d}_\kappa$ always attains its maximal possible value. More precisely, 
he proved the following theorem:

\begin{theorem}[{\cite[Claim 1.5(2)]{shelah_d}}]
  Suppose that $\kappa$ is a singular cardinal and $\mu^{\mathrm{cf}(\kappa)} < \kappa$ 
  for all $\mu < \kappa$. Then $\mathfrak{d}_\kappa = 2^\kappa$.
\end{theorem}

We now specify assignments to define a version of the formula $\Psi^*(T, 0)$ appropriate for 
the dominating number.
Let $\mc X = {^\kappa}\theta$ and, for each limit ordinal $i < \theta$, let $\mc X_i = {^{\kappa_i}}i$. 
Let $\squig$ and $\squig_i$ be the relations $<^*$ on $\mc X$ and 
$\mc X_i$, respectively. By Proposition \ref{d_prop} and Fact \ref{d_fact}, we have 
$\mf d_\kappa = \cf(\mc X, \squig)$ and, for all limit $i < \theta$, we have $\mf d_{\kappa_i} = 
\cf(\mc X_i, \squig_i)$. 

For each limit ordinal $i < \theta$, define a map $\pi_i : \mc X \rightarrow \mc X_i$ as follows. 
For all $x \in \mc X$ and all $\eta < \kappa_i$, let 
\[
  \pi_i(x)(\eta) = \begin{cases}
    x(\eta) & \text{if } x(\eta) < i \\ 
    0 & \text{otherwise}.
  \end{cases}
\]
Note that we do indeed have $\pi_i(x) \in {^{\kappa_i}}i = \mc X_i$, as desired. Finally, let 
$e: \mathrm{NS}^+_\theta \rightarrow \mathrm{Card}$ be defined by letting $e(S) = 
\underline{d_{\mathrm{stat}}({^S}\kappa)}$ for all stationary $S \subseteq \theta$ (recall this 
notation from Definition \ref{stat_density_def}).

\subsection{The general lemma}

We now show that, in all of the cases introduced in this section, the corresponding version 
of $\Psi^*(T, 0)$ holds for all stationary $T \subseteq \theta$. Note that we may assume 
that $\varphi^\theta_0(i) = 0$ for all $i < \theta$, so clause (2) in the definition 
of $\psi(T, 0)$ asserts that $|\mc Y_i| \leq \kappa_i$ for all $i \in T$.

\begin{lemma} \label{base_lemma}
  For any cardinal characteristic $\mf{cc} \in \{m(\theta, \kappa), \mc D(\kappa, \theta), \mf r_\kappa, 
  \mf d_\kappa\}$, the corresponding formula $\Psi^*(T, 0)$ holds for every stationary 
  $T \subseteq \theta$.
\end{lemma}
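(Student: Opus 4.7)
The plan is to handle the four cardinal characteristics separately, each via a common three-step template. Given $\mc Z$ and $\langle \mc Y_i \mid i \in T\rangle$ as in the hypothesis of $\Psi^*(T,0)$: (a) localize, by handling those $z \in \mc Z$ whose essential content already lives below some $\kappa_j$ using the corresponding local cardinal characteristic at $\kappa_j$; (b) aggregate the level-wise witnesses $y_{z,i}$ across $i \in C_z \cap T$ into a single global witness $\hat y_z \in \mc X$; (c) count, showing the resulting family can be thinned out to have cardinality at most $\kappa + e(T)$.

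For meeting and density, step (a) uses local families in $[\kappa_j]^\theta$ of sizes $m(\theta, \kappa_j)$ or $d(\theta, \kappa_j)$; the totals of these are bounded by $e(T) = \sum_{j<\theta} m(\theta, \kappa_j)$ in the meeting case and by $\sum_{j<\theta} d(\theta, \kappa_j) \leq \underline{d(\theta, \kappa)} = e(T)$ in the density case, and together they handle every $z$ with $|z \cap \kappa_j| = \theta$ for some $j$. For each remaining ``thin'' $z$ (satisfying $|z \cap \kappa_j| < \theta$ for all $j$), the aggregate $\hat y_z := \bigcup_{i \in C_z \cap T}(\pi_i(z) \cap y_{z,i})$ (meeting) or $\hat y_z := \bigcup_{i \in C_z \cap T} y_{z,i}$ (density) is a subset of $z$ of cardinality $\theta$ cofinal in $\kappa$, hence itself an element of $\mc X$ witnessing $z \squig \hat y_z$; crucially, every such $\hat y_z$ lies inside the fixed set $\mc U := \bigcup_{i \in T}\bigcup \mc Y_i \subseteq \kappa$, which satisfies $|\mc U| \leq \kappa$ and $|\mc U \cap \kappa_j| \leq \kappa_j$ for every $j$. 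The reaping case runs in parallel: step (a) is vacuous since $\mc X = [\kappa]^\kappa$, and for each $z$ a Fodor-style pigeonhole on the local dichotomy ``$|(z \cap \kappa_i) \cap y_{z,i}| < \kappa_i$'' versus ``$|(z \cap \kappa_i) \setminus y_{z,i}| < \kappa_i$'' produces a stationary $T' \subseteq C_z \cap T$ on which one side is uniformly small, so that $\hat y_z := \bigcup_{i \in T'} y_{z,i}$ fails to split $z$.

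For the dominating number, aggregation is cleanest and is the direct source of the appearance of $\underline{d_{\mathrm{stat}}({}^T \kappa)}$ in $e(T)$. Each $z \in {}^\kappa \theta$ yields a partial function $\xi_z: i \mapsto \xi_{z,i}$ from a club in $T$ into $\kappa$, where $\xi_{z,i} < \kappa_i$ is an index for $y_{z,i}$ inside $\mc Y_i$; by Definition \ref{stat_density_def}, a family of such partial functions of cardinality at most $\underline{d_{\mathrm{stat}}({}^T \kappa)}$ catches every $\xi_z$ modulo a nonstationary set. For each representative $f$, piecing together the $y_{i, f(i)}$ (suitably truncated so that values remain in $\theta$) gives a single global $y_f \in {}^\kappa \theta$ that $<^*$-dominates every $z$ for which $f \subseteq \xi_z$ modulo a nonstationary set.

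The hard part will be step (c) for the meeting and density cases, where the counting bound must be $\kappa + \underline{d(\theta, \kappa)}$ without a purpose-built statistic like $d_{\mathrm{stat}}$ to carry the bookkeeping. The strategy there is to reduce the $\prod_{i \in T} |\mc Y_i|$-many aggregate sequences $\langle y_{z,i} \rangle$ modulo bounded-in-$\kappa$ alteration using a cofinal/scale argument on $\prod_{i \in T} |\mc Y_i| \leq \prod_i \kappa_i$, and then to apply the local density families $\mc D_j$ to catch the bounded initial segments $\hat y_z \cap \kappa_j$ of each aggregate; the cofinal tails contribute only $\kappa$ further elements because they live inside $\mc U$.
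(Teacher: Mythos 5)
Your step (c) — the counting — is exactly where the proposal breaks, and unfortunately the paper's argument is built around a different idea that makes the per-$z$ aggregation you describe unnecessary.

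In the meeting, density, and reaping cases you propose to form, for each $z \in \mc Z$, a bespoke global witness $\hat y_z$ by gathering up the level-wise witnesses $y_{z,i}$ and then to ``thin out'' the resulting family. But there are potentially $2^\kappa$-many distinct $z$'s, hence $2^\kappa$-many distinct $\hat y_z$'s, and the bound $\hat y_z \subseteq \mc U$ with $|\mc U| \leq \kappa$ is of no help: the number of size-$\theta$ (resp.\ size-$\kappa$) subsets of $\mc U$ is $\kappa^\theta$ (resp.\ $2^\kappa$), not $\kappa$. The ``cofinal/scale argument on $\prod_i \kappa_i$'' you gesture at for thinning is the mechanism the paper uses in the \emph{inductive} step (Corollary~\ref{scale_cor}), and it yields bounds on the order of $\kappa^{+\beta}$, not $\kappa + e(T)$; it cannot deliver the sharp base-case bound that $\Psi^*(T,0)$ requires.

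The paper's actual argument builds $\mc Y$ \emph{before} inspecting any $z$, by reindexing. For each $j < \theta$ it fixes a meeting/dense family $\mc W_j \subseteq [T \times \kappa_j]^\theta$ of size $\leq e(T)$, and for $w \in \mc W_j$ forms $y^*_w := \bigcup\{y_{i,\xi} : (i,\xi) \in w\}$. Only these $\leq \kappa + e(T)$ sets (together with the $\mc Y^*_j$'s catching bounded $z$'s) go into $\mc Y$. Then, given $z$, Fodor pins down a single $j$ and a stationary $T' \subseteq C \cap T$ with $\xi_i < \kappa_j$ for $i \in T'$; the set $u := \{(i,\xi_i) : i \in T'\}$ lives in $[T \times \kappa_j]^\theta$, and the appropriate $w \in \mc W_j$ that meets (or is contained in) $u$ yields the witness $y^*_w$ for $z$. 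The regularization happens at the level of the \emph{index set} $T \times \kappa_j$, not at the level of the output sets $\hat y_z$. This is the missing idea. (The reaping case needs one extra step — a pigeonhole to fix which side of the splitting dichotomy holds — which you do have; but the counting then proceeds via the $\mc W_j$'s, not via per-$z$ aggregates.)

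Your dominating-number sketch, by contrast, is essentially the paper's argument: you correctly observe that $\underline{d_{\mathrm{stat}}({}^T\kappa)}$ is there to catch the index functions $i \mapsto \xi_{z,i}$ modulo a nonstationary set. What you leave as ``suitably truncated'' does need care — the paper defines $y^*_h(\eta)$ via a pointwise Fodor argument over $i$ and must track a small exceptional set $E$ of $\eta$'s where the domination can fail — but the conceptual move is right. If you carry the same philosophy back to the other three cases (regularize in the index, not in the output), you will recover the intended proof.
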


\begin{proof}
  We begin with some general preliminaries and then split into cases depending on the 
  cardinal characteristic under consideration.

  Fix a stationary set $T \subseteq \theta$; we may assume that every element of $T$ is 
  a limit ordinal. Fix assignments for $\mc X$, $\squig$, $\langle (\mc X_i, 
  \squig_i, \pi_i) \mid i < \theta \rangle$, and $e$ corresponding to one of the cardinal 
  characteristics introduced in this section. To verify $\Psi^*(T, 0)$, fix a set $\mc Z   
  \subseteq \mc X$ and, for each $i \in T$, a set $\mc Y_i \subseteq \mc X_i$ such that
  \begin{itemize}
    \item for all $i \in T$, $|\mc Y_i| \leq \kappa_i$;
    \item for all $z \in \mc Z$, there is a club $C \subseteq \theta$ such that, for all 
    $i \in C \cap T$, there is $y \in \mc Y_i$ for which $\pi_i(z) \squig_i y$.
  \end{itemize}
  For each $i \in T$, enumerate $\mc Y_i$ as $\langle y_{i, \xi} \mid \xi < \kappa_i \rangle$ 
  (with repetitions if $|\mc Y_i| < \kappa_i$). 
  We will find $\mc Y \subseteq \mc X$ such that $|\mc Y| \leq \kappa + e(T)$ and, for all $z \in 
  \mc Z$, there is $y \in \mc Y$ for which $z \squig y$. Our method for doing this will 
  depend on the precise cardinal characteristic.
  
  \textbf{Case 1: $m(\theta, \kappa)$.}
  Recall that in this case $e(T) = \sum_{j < \theta} m(\theta, \kappa_j)$. Therefore, for each 
  $j < \theta$, we can fix a family $\mc W_j \subseteq [T \times \kappa_j]^\theta$ such that 
  \begin{itemize}
    \item $|\mc W_j| \leq e(T)$;
    \item for all $u \in [T \times \kappa_j]^\theta$, there is $w \in \mc W_j$ such that 
    $|w \cap u| = \theta$.
  \end{itemize}
  Let $\mc W := \bigcup_{j < \theta} \mc W_j$. For each $w \in \mc W$, let 
  $y^*_w = \bigcup \{y_{i, \xi} \mid 
  (i, \xi) \in w\}$, and note that $y^*_w \in [\kappa]^{\leq \theta}$. 
  
  Also, for each $j < \theta$, let $\mc Y^*_j \subseteq [\kappa_j]^\theta$ be such that $|\mc Y^*_j| \leq 
  e(T)$ and, for all $x \in [\kappa_j]^\theta$, there is $y \in \mc Y^*_j$ such that $|y \cap x| = 
  \theta$. Finally, let 
  \[
  \mc Y := \bigcup_{j < \theta} \mc Y^*_j \cup \left (
  \{y^*_w \mid w \in \mc W_j\} \cap [\kappa]^\theta \right).
  \]
  
  We claim that $\mc Y$ is as desired. It is evident that $\mc Y \subseteq \mc X$ and $|\mc Y| 
  \leq \kappa + e(T)$. It remains to show that, for all $z \in \mc Z$, there is $y \in \mc Y$ 
  such that $z \squig y$, i.e., $|y \cap z| = \theta$. To this end, fix $z \in \mc Z$. If 
  there is $j < \theta$ such that $z \subseteq \kappa_j$, then there is $y \in \mc Y^*_j$ 
  such that $|y \cap z| = \theta$, and we are done. Therefore, we may assume that 
  $z$ is unbounded in $\kappa$. By assumption, 
  there is a club $C \subseteq \theta$ such that, for all $i \in C \cap T$, we have 
  \begin{itemize}
    \item $\sup(z \cap \kappa_i) = \kappa_i$, and hence $\pi_i(z)$ is an unbounded subset of 
    $z \cap \kappa_i$ of order type $\cf(i)$;
    \item there is $\xi_i < \kappa_i$ for which $\pi_i(z) \squig_i y_{i, \xi_i}$, i.e., 
    $|y_{i, \xi_i} \cap \pi_i(z)| = \cf(i)$.
  \end{itemize}
  Since each $i \in T$ is a limit ordinal, we can apply Fodor's lemma to find a fixed $j < \theta$ and a stationary 
  $T' \subseteq C \cap T$ such that $\xi_i < \kappa_j$ for all $i \in T'$. Let $u = \{(i, \xi_i) \mid 
  i \in T'\}$. Then $u \in [T \times \kappa_j]^\theta$, so, by our choice of $\mc W_j$, we can find $w \in \mc W_j$ such that $|w \cap u| = 
  \theta$. It follows that the set $T'' := \{i \in T' \mid (i, \xi_i) \in w\}$ is unbounded in $\theta$.
  
  Consider the set $y^*_w := \bigcup \{y_{i, \xi} \mid (i, \xi) \in w\}$. Note that, for all 
  $i \in T''$, we know that $\pi_i(z)$ is an unbounded subset of 
  $z \cap \kappa_i$ of order type $\cf(i)$, and we know that $|y_{i, \xi_i} \cap \pi_i(z)| = 
  \cf(i)$. It follows that $y_{i, \xi_i} \cap \pi_i(z)$ is an unbounded subset of $z \cap \kappa_i$, 
  and therefore $y^*_w \cap z$ is an unbounded subset of $z$. In particular, $y^*_w \in 
  \mc Y$ and $z \squig y^*_w$, as desired.
  
  \textbf{Case 2: $d(\theta, \kappa)$.} The argument in this case is very similar to that in the 
  previous case, so we suppress some details. Since $e(T) = \underline{d(\theta, \kappa)} = 
  \sum_{j < \theta} d(\theta, \kappa_j)$, we can fix, for each $j < \theta$, a family 
  $\mc W_j \subseteq [T \times \kappa_j]^\theta$ that is dense in $([T \times \kappa_j]^\theta, \subseteq)$ 
  and has cardinality at most $e(T)$. Let $\mc W := \bigcup_{j < \theta} \mc W_j$ and, for each 
  $w \in \mc W$, let $y^*_w := \bigcup \{y_{i, \xi} \mid (i, \xi) \in w\}$.
  
  Also, for each $j < \theta$, fix $\mc Y^*_j \subseteq [\kappa_j]^\theta$ that is dense in 
  $([\kappa_j]^\theta, \subseteq)$ and has cardinality at most $e(T)$. Let $\mc Y := \bigcup_{j < \theta} 
  \mc Y^*_j \cup \left (\{y^*_w \mid w \in \mc W_j\} \cap [\kappa]^\theta \right)$.
  
  It is evident that $\mc Y \subseteq \mc X$ and $|\mc Y| \leq \kappa + e(T)$. To verify that 
  $\mc Y$ is as desired, fix $z \in \mc Z$. We must find $y \in \mc Y$ such that $y \subseteq z$. 
  If there is $j < \theta$ such that $z \subseteq \kappa_j$, then there is $y \in \mc Y^*_j$ such 
  that $y \subseteq z$, and we are done. Thus, suppose that $z$ is unbounded in $\kappa$. Then 
  there is a club $C \subseteq \theta$ such that, for all $i \in C \cap T$, we have 
  \begin{itemize}
    \item $\sup(z \cap \kappa_i) = \kappa_i$;
    \item there is $\xi_i < \kappa_i$ for which $y_{i, \xi_i} \subseteq \pi_i(z)$.
  \end{itemize}
  We can again find a stationary $T' \subseteq C \cap T$ and a fixed $j < \theta$ such that 
  $\xi_i < \kappa_j$ for all $i \in T'$. Let $u = \{(i, \xi_i) \mid i \in T'\}$, and 
  find $w \in \mc W_j$ such that $w \subseteq u$. As before, the set $T'' := \{i \in 
  T' \mid (i, \xi_i) \in w\}$ is unbounded in $\theta$; moreover, $w$ is \emph{precisely} 
  $\{(i, \xi_i) \mid i \in T''\}$. Therefore, $y^*_w = \bigcup\{y_{i,{\xi_i}} \mid 
  i \in T''\} \subseteq z$. It also follows exactly as in the previous case that 
  $y^*_w$ is unbounded in $\kappa$ and is therefore an element of $\mc Y$.
  
  \textbf{Case 3: $\mf r_\kappa$.} In this case, $e(T) = \underline{d(\theta, \kappa)}$. 
  Therefore, for each $j < \theta$, we can fix as in the previous case a set 
  $\mc W_j \subseteq [T \times 
  \kappa_j]^\theta$ such that $\mc W_j$ is dense in $([T \times \kappa_j]^\theta, 
  \subseteq)$ and $|\mc W_j| \leq d(T)$. Let $\mc W = \bigcup_{j < \theta} \mc D_j$ and, 
  for each $w \in \mc D$, let 
  $y^*_w = \bigcup \{y_{i, \xi} \mid (i,\xi) \in w\}$. Finally, let $\mc Y = \{y^*_w \mid w \in 
  \mc D\} \cap [\kappa]^\kappa$.
  
  We claim that $\mc Y$ is as desired. It is evident that $\mc Y \subseteq 
  [\kappa]^\kappa$ and $|\mc Y| \leq e(T)$. It remains to verify that no element of 
  $\mc Z$ splits every element of $\mc Y$. To this end, fix $z \in \mc Z$.
  By assumption, we can find a club $C \subseteq \theta$ such that,
  for all $i \in C \cap T$, we can find $\xi_i < \kappa_i$ such that either 
  $|y_{i, \xi_i} \cap \pi_i(z)| < \kappa_i$ or $|y_{i, \xi_i} \setminus \pi_i(z)| < \kappa_i$. 
  We can also assume that, for all $i \in C$, we have $|z \cap \kappa_i| = \kappa_i$, 
  and hence $\pi_i(z) = z \cap \kappa_i$.
  
  Find a stationary $S_0 \subseteq C \cap T$ such that either
  \begin{enumerate}
    \item for all $i \in S_0$, $|y_{i, \xi_i} \cap \pi_i(z)| < \kappa_i$; \emph{or}
    \item for all $i \in S_0$, $|y_{i, \xi_i} \setminus \pi_i(z)| < \kappa_i$.
  \end{enumerate}
  Without loss of generality, assume that (1) holds (the proof is symmetric if (2) holds).
  For each limit ordinal $i \in S_0$, we can find $j(i) < i$ such that 
  $\max\{\xi_i, |y_{i, \xi_i} \cap \pi_i(z)|\} < \kappa_{j(i)}$. Since $S_0$ is a stationary 
  subset of $\theta$, we can therefore find a stationary $S_1 \subseteq S_0$ and a 
  fixed $j < \theta$ such that $j(i) = j$ for all $i \in S_1$.
  
  Let $u = \{(i, \xi_i) \mid i \in S_1\}$. Then $u \in [\theta \times \kappa_j]^\theta$, 
  so we can find $w \in \mc W_j \subseteq \mc W$ such that $w \subseteq u$. 
  Note that $\{i \in S_1 \mid (i, \xi_i) \in w\}$ must be unbounded in $\theta$, 
  so we have $|y^*_w| = \kappa$, and hence $y^*_w \in \mc Y$. Moreover,
  \[
  	y^*_w \cap z = \bigcup_{(i, \xi) \in w} (y_{i, \xi} \cap z) = 
  	\bigcup_{(i, \xi) \in w} (y_{i, \xi} \cap \pi_i(z)) \subseteq 
  	\bigcup_{i \in S_1} (y_{i, \xi_i} \cap \pi_i(z)).
  \]
  Since $|y_{i, \xi_i} \cap \pi_i(z)| < \kappa_j$ for all $i \in S_1$ and $\kappa_j > 
  \theta = |S_1|$, it follows that $|y^*_w \cap z| \leq \kappa_j < \kappa$, so $z$ 
  does not split $y^*_w$, i.e., we have $z \squig y^*_w$, as desired.
  
  \textbf{Case 4: $\mf d_\kappa$.} In this case, $e(T) = \underline{d_{\mathrm{stat}} }
  (^{T} \kappa)$. Therefore, we can fix a family $\mc H$ such that 
  \begin{itemize}
    \item $|\mc H| = e(T)$;
    \item every element of $\mc H$ is a function from a stationary subset of $T$ to 
    $\kappa$ whose range is bounded below $\kappa$;
    \item for every function $g$ from a stationary subset of $T$ to $\kappa$ such that 
    the range of $g$ is bounded below $\kappa$, there is a function $h \in \mc H$ 
    such that $\{i \in \dom(h) \mid i \notin \dom(g) \text{ or } h(i) \neq g(i)\}$ 
    is nonstationary in $\theta$.
  \end{itemize}
  Recall also that, for each $i \in T$ and each $\xi < \kappa_i$, we have 
  $y_{i, \xi} \in {^{\kappa_i}}i$.  
  
  For each $h \in \mc H$, define a function $y^*_h \in {^\kappa}\theta$ as follows. 
  Let $T' = \dom(h)$. Since the range of $h$ is bounded below $\kappa$, we know that, 
  for all sufficiently large $i \in T'$, we have $h(i) < \kappa_i$, and hence
  $y_{i, h(i)}$ is defined. Therefore, for all sufficiently large $i \in T'$ and all 
  $\eta < \kappa_i$, we have $y_{i, h(i)}(\eta) < i$. Therefore, by 
  Fodor's Lemma, for each $\eta < \kappa$, we can find a $j < \theta$ such that 
  \[
  	\{i \in T' \mid \eta < \kappa_i, ~ h(i) < \kappa_i \text{ and } y_{i, h(i)}(\eta) = j\}
  \]
  is stationary in $\theta$. Let $y^*_h(\eta)$ be the least such $j$.
  
  Let $\mc Y = \{y^*_h \mid h \in \mc H\}$. We claim that $\mc Y$ is as desired. It is evident 
  that $\mc Y \subseteq {^\kappa}\theta$ and $|\mc Y| \leq e(T)$. It remains to verify that, 
  for every $z \in \mc Z$, there is $y \in \mc Y$ such that $z <^* y$. To this end, fix 
  $z \in \mc Z$. By assumption, we can find a club $C \subseteq \theta$ such that, for 
  all $i \in C \cap T$, there is $\xi_i < \kappa_i$ for which $\pi_i(z) <^* y_{i, \xi_i}$. 
  For each $i \in C \cap T$, let 
  \[
    E_i = \{\eta < \kappa_i \mid y_{i, \xi_i}(\eta) \leq \pi_i(z)(\eta)\},
  \]
  and note that $|E_i| < \kappa_i$. By two applications of Fodor's lemma, we can find a
  $j < \theta$ and a stationary $T' \subseteq C \cap T$ such that $\max\{\xi_i, |E_i|\} 
  < \kappa_j$ for all $i \in T'$. Then the map from $T'$ to $\kappa$ defined by sending 
  each $i \in T'$ to $\xi_i$ has range bounded below $\kappa$, so we can find 
  $h \in \mc H$ and a club $D \subseteq \theta$ such that, letting $T'' = \dom(h)$, 
  the following statement holds: for all $i \in D \cap T''$, we have $i \in T'$ and 
  $h(i) = \xi_i$. 
  
  We claim that $z <^* y^*_h$. To see this, first let $E := \bigcup_{i \in D \cap T''} E_i$, 
  and note that $|E| \leq \theta \cdot \kappa_j < \kappa$. It therefore suffices to show that 
  $z(\eta) < y^*_h(\eta)$ for all $\eta \in \kappa \setminus E$. To this end, fix 
  $\eta \in \kappa \setminus E$. Fix $\ell < \theta$ such that $\eta < \kappa_\ell$ and 
  $z(\eta) < \ell$. Then, for all $i \in {D \cap T''} \setminus \ell$, we have 
  $\pi_i(z)(\eta) = z(\eta)$. Moreover, for all such $i$, we have $\eta \notin E_i$, 
  and hence $z(\eta) < y_{i, \xi_i}(\eta) = y_{i, h(i)}(\eta)$. Recall that $y^*_h(\eta)$ 
  was defined in such a way that there is a stationary set $T^* \subseteq T''$ such 
  that $y^*_h(\eta) = y_{i, h(i)}(\eta)$ for all $i \in T^*$. Since 
  $D$ is a club in $\theta$, we can fix $i^* \in (D \cap T^*) \setminus \ell$. 
  But then we have $z(\eta) < y_{i^*, h(i^*)}(\eta) = y^*_h(\eta)$, as desired.
\end{proof}

Combining the results of this and the previous section, we obtain the precise statement 
of our main result.

\begin{mainthm}
  Suppose that 
  \begin{itemize}
    \item $\kappa$ is a singular cardinal and $\cf(\kappa) = \theta > \omega$;
    \item $\langle \kappa_i \mid i < \theta \rangle$ is an increasing, continuous 
    sequence of cardinals converging to $\kappa$;
    \item $\beta$ is an ordinal for which $\varphi^\theta_\beta$ exists;
    \item $S \subseteq \theta$ is stationary;
    \item $\mf{cc}$ is one of the cardinal characteristics $m(\theta, \kappa)$, $d(\theta, \kappa)$,
    $\mf r_\kappa$, or $\mf d_\kappa$, and, for each $i < \theta$, $\mf{cc}_i$ 
    is the corresponding cardinal characteristic $m(\cf(\kappa_i), \kappa_i)$, $d(\cf(\kappa_i), \kappa_i)$,
    $\mf r_{\kappa_i}$, or $\mf d_{\kappa_i}$;
    \item for all $i \in S$, we have $\mf{cc}_i \leq \kappa_i^{+\varphi^\theta_\beta(i)}$.
  \end{itemize}
  Then: 
  \begin{enumerate}
    \item If $\mf{cc} = m(\theta, \kappa)$, then $\mf{cc} \leq \kappa^{+\beta} + 
    \sum_{j < \theta} m(\theta, \kappa_j) + d(\mathrm{NS}_\theta \restriction 
    S)$. Moreover, if $\beta < \omega$, then $\mf{cc} \leq \kappa^{+\beta} + 
    \sum_{j < \theta} m(\theta, \kappa_j)$.
    \item If $\mf{cc} = d(\theta, \kappa)$ or $\mf{cc} = \mf r_\kappa$, then $\mf{cc} \leq \kappa^{+\beta} + 
    \underline{d(\theta, \kappa)} + d(\mathrm{NS}_\theta \restriction S)$. Moreover, 
    if $\beta < \omega$, then $\mf{cc} \leq \kappa^{+\beta} + 
    \underline{d(\theta, \kappa)}$.
    \item If $\mf{cc} = \mf d_\kappa$, then $\mf{cc} \leq \kappa^{+\beta} + 
    \underline{d_{\mathrm{stat}}({^S}\kappa)}$.
  \end{enumerate}
\end{mainthm}

\begin{proof}
  This follows directly from Lemma \ref{base_lemma} and Corollary \ref{main_framework_cor}.
\end{proof}

\section{Open questions} \label{question_section}

Throughout this section, $\kappa$ will denote an arbitrary infinite cardinal.
We feel that the most prominent cardinal characteristic at singular cardinals that is not covered 
by our results here is the \emph{ultrafilter number}, a close relative of the reaping number.

\begin{definition}
  The ultrafilter number $\mathfrak{u}_\kappa$ is the minimal size of a base for a uniform ultrafilter 
  over $\kappa$. In other words, it is the minimal cardinal $\lambda$ for which there exists a 
  uniform ultrafilter $U$ over $\kappa$ and a family $\mathcal{X} \subseteq U$ such that $|\mathcal{X}| 
  = \lambda$ and, for all $Y \in U$, there is $X \in \mathcal{X}$ such that $|X \setminus Y| < \kappa$.
\end{definition}

It is provable that $\mathfrak{u}_\kappa > \kappa$, and the ultrafilter number at singular cardinals 
has been extensively studied (cf.\ \cite{garti_shelah_ultrafilter}, \cite{garti_shelah_generalized}, 
\cite{honzik_stejskalova}, among others). 

\begin{question}
  Does a version of our Main Theorem hold for the ultrafilter number?
\end{question}

We briefly mentioned the almost disjointness number in the Introduction; 
we feel that some interesting questions can be formulated around it. We first recall the relevant 
definitions.

\begin{definition}
  An \emph{almost disjoint family} over $\kappa$ is a family $\mathcal{A} \subseteq [\kappa]^\kappa$ 
  such that, for all distinct $A,B \in \mathcal{A}$, we have $|A \cap B| < \kappa$. Such a family 
  is a \emph{maximal almost disjoint family} (MAD family) over $\kappa$ if, moreover, there is no almost 
  disjoint family $\mathcal{B}$ over $\kappa$ with $\mathcal{A} \subsetneq \mathcal{B}$.
\end{definition}

There are trivial ways to form MAD families over $\kappa$ (as an extreme case, $\{\kappa\}$ is a MAD 
family over $\kappa$). The \emph{almost disjointness number} $\mathfrak{a}_\kappa$ is defined to be 
the minimal cardinality of a \emph{nontrivial} MAD family over $\kappa$. It remains to specify what 
\emph{nontriviality} means. The most natural solution seems to be to say that a MAD family is 
nontrivial if and only if its cardinality is greater than $\cf(\kappa)$ (this is the approach taken, 
for instance, in \cite{kojman_kubis_shelah}). Under this definition, it is not difficult to prove that 
$\mathfrak{a}_\kappa \leq \mathfrak{a}_{\cf(\kappa)}$. However, there always exist MAD families over $\kappa$ of cardinality strictly greater than $\kappa$, so one could also declare that a MAD family 
over $\kappa$ is nontrivial if and only if its cardinality is greater than $\kappa$. Let us denote 
the version of the almost disjointness number arising from this more stringent definition of nontriviality 
by $\mathfrak{a}^*_\kappa$. 

\begin{question}
  Does a version of our Main Theorem hold for $\mathfrak{a}^*_\kappa$?
\end{question}

The most immediate specific incarnation of this question would be the following:

\begin{question}
  Suppose that $\kappa$ is a singular strong limit cardinal of uncountable cofinality and there is a 
  stationary subset $S \subseteq \kappa$ consisting of singular cardinals such that, for all $\mu \in S$, 
  there is a MAD family over $\mu$ of cardinality $\mu^+$. Must there be a MAD family over $\kappa$ 
  of cardinality $\kappa^+$.
\end{question}

\begin{definition}
 A graph $G$ is \emph{universal for graphs of size $\kappa$} if, for every graph $H$ with at most 
 $\kappa$-many vertices, there is an induced subgraph of $G$ that is isomorphic to $H$. Let 
 $\mathfrak{ug}_\kappa$ denote the minimal number of vertices in a graph $G$ that is universal for 
 graphs of size $\kappa$.
\end{definition}

\begin{question}
 Does a version of our Main Theorem hold for $\mathfrak{ug}_\kappa$?
\end{question}

We are also interested in whether analogues of Silver's theorem hold for statements that are not 
naturally formulated as statements about cardinal characteristics but which are consequences of 
$2^\kappa = \kappa^+$. We record here some particularly prominent examples.

\begin{definition}
 The polarized partition relation $\begin{pmatrix} \kappa^{+} \\ \kappa \end{pmatrix} 
 \rightarrow \begin{pmatrix} \kappa^+ \\ \kappa \end{pmatrix}^{1,1}_2$
 is the assertion that, for every function $c:\kappa^+ \times \kappa \rightarrow 2$, there are sets 
 $A \in [\kappa^+]^{\kappa^+}$ and $B \in [\kappa]^{\kappa}$ such that $c \restriction A \times B$ is 
 constant. The negation of this relation is denoted by $\begin{pmatrix} \kappa^{+} \\ \kappa \end{pmatrix} 
 \not\rightarrow \begin{pmatrix} \kappa^+ \\ \kappa \end{pmatrix}^{1,1}_2$.
\end{definition}

Erd\H{o}s, Hajnal, and Rado prove in \cite{ehr} that, if $2^\kappa = \kappa^+$, then $\begin{pmatrix} 
\kappa^{+} \\ \kappa \end{pmatrix} \not\rightarrow \begin{pmatrix} \kappa^+ \\ \kappa \end{pmatrix}^{1,1}_2$. On the other hand, Garti and Shelah prove in \cite{garti_shelah_polarized} that, assuming the 
consistency of a supercompact cardinal, the positive relation $\begin{pmatrix} \kappa^{+} \\ 
\kappa \end{pmatrix} \rightarrow \begin{pmatrix} \kappa^+ \\ \kappa \end{pmatrix}^{1,1}_2$ consistently 
holds for a singular strong limit cardinal $\kappa$ (in their result, $\kappa$ can have either countable 
or uncountable cofinality).

\begin{question}
 Suppose that $\kappa$ is a singular cardinal of uncountable cofinality and there is a stationary set 
 $S \subseteq \kappa$ consisting of singular cardinals such that, for all $\mu \in S$, we have 
 $\begin{pmatrix} \mu^{+} \\ \mu \end{pmatrix} 
 \not\rightarrow \begin{pmatrix} \mu^+ \\ \mu \end{pmatrix}^{1,1}_2$. Must it 
 be the case that $\begin{pmatrix} \kappa^{+} \\ \kappa \end{pmatrix} 
 \not\rightarrow \begin{pmatrix} \kappa^+ \\ \kappa \end{pmatrix}^{1,1}_2$?
\end{question}

In an early draft of this paper, we included here a question about Aronszajn trees 
at double successors of singular cardinals. We then realized that existing work of 
Golshani and Mohammadpour \cite{golshani_mohammadpour} provides an answer to this 
question, so we give a very brief account of this here.

Recall that, for a regular uncountable cardinal $\lambda$, a 
\emph{$\lambda$-Aronszajn tree} is a tree of height $\lambda$ with no levels or 
branches of cardinality $\lambda$. If $\lambda = \mu^+$, then a $\lambda$-Aronszajn 
tree $T$ is \emph{special} if there is a function $f:T \rightarrow \lambda$ that is 
injective on chains. Note that a special $\mu^+$-Aronszajn tree remains special in 
any outer model in which $\mu^+$ is preserved.
By a result of Specker \cite{specker}, if 
$\mu$ is regular and $\mu^{<\mu} = \mu$, then there is a special $\mu^+$-Aronszajn tree. 
In particular, if $2^\kappa = \kappa^+$, then there is a $\kappa^{++}$-Aronszajn 
tree. Therefore, the nonexistence of Aronszajn trees at the double successor of a 
singular strong limit cardinal requires a failure of the Singular Cardinals 
Hypothesis. In an earlier draft of this paper, we asked whether the existence of 
$\kappa^{++}$-Aronszajn trees satisfies a version of Silver's theorem. Here, 
we give a consistent negative answer to this question that follows almost 
immediately from the work in \cite{golshani_mohammadpour}.

\begin{theorem} \label{silver_a_tree_thm}
  Suppose that $\kappa$ is supercompact and $\lambda > \kappa$ is measurable. 
  Then there is a forcing extension in which $\kappa$ is a singular cardinal of 
  uncountable cofinality, there are $\mu^{++}$-Aronszajn trees for all $\mu < 
  \kappa$, but there are no $\kappa^{++}$-Aronszajn trees.
\end{theorem}

\begin{proof}[Proof sketch]
  We can assume that $\mathsf{GCH}$ holds in $V$. Therefore, by the aforementioned result
  of Specker, there is a special $\mu^{++}$-Aronszajn tree for all $\mu$.
  By the techniques of \cite{laver}, we can 
  arrange so that the supercompactness of $\kappa$ is preserved after adding any number of 
  Cohen subsets to $\kappa$ by forcing with an Easton-support iteration of length 
  $\kappa$ with the property that, for all $\alpha < \kappa$, either 
  the $\alpha^{\mathrm{th}}$ iterand is forced to be trivial or $\alpha$ is inaccessible 
  and the $\alpha^{\mathrm{th}}$ iterand is forced to be of the form 
  $\mathrm{Add}(\alpha, \beta)$ for some $\beta < \kappa$. Moreover, this iteration can 
  be defined so that it preserves all cardinals. (More precisely, we can let 
  $f:\kappa \rightarrow V_\kappa$ be a Laver function and, for all $\alpha < \kappa$, 
  let the $\alpha^{\mathrm{th}}$ iterand be forced to be trivial unless 
  $\alpha$ is inaccessible, $f(\alpha)$ is a cardinal, and $f``\alpha \subseteq V_\alpha$, 
  in which case the $\alpha^{\mathrm{th}}$ iterand is forced to be $\mathrm{Add}(\alpha, f(\alpha))$.) 
  
  Let $V_1$ be the extension 
  of $V$ by this forcing iteration. Since $V$ and $V_1$ have the 
  same cardinals, it remains true in $V_1$ that there is a special 
  $\mu^{++}$-Aronszajn tree for all $\mu$. Moreover, in $V_1$ it is the case that 
  the supercompactness of $\kappa$ is preserved after adding any number of Cohen subsets 
  to $\kappa$.
 
  Let $\delta < \kappa$ be a regular uncountable cardinal. By the results of 
  \cite{golshani_mohammadpour} (in particular the results of Sections 4 and 5 of 
  that paper), there is in $V_1$ a forcing notion $\bb{R}$ with the following properties:
  \begin{itemize}
    \item $\bb{R}$ preserves all cardinals below $\kappa^+$;
    \item $V_1^{\bb{R}} \models \cf(\kappa) = \delta$;
    \item $V_1^{\bb{R}} \models 2^\kappa = \lambda = \kappa^{++}$;
    \item $V_1^{\bb{R}} \models ``\text{there are no } \kappa^{++}\text{-Aronszajn trees}"$.
  \end{itemize}
  Since $\bb{R}$ preserves all cardinals below $\kappa^+$, it remains true in 
  $V_1^{\bb{R}}$ that there is a special $\mu^{++}$-Aronszajn tree for all 
  $\mu < \kappa$. Therefore, $V_1^{\bb{R}}$ is the desired forcing extension.
\end{proof}

\bibliographystyle{plain}
\bibliography{bib}

\begin{thebibliography}{10}

\bibitem{baumgartner_prikry}
J.~E. Baumgartner and K.~Prikry.
\newblock On a theorem of {S}ilver.
\newblock {\em Discrete Math.}, 14(1):17--21, 1976.

\bibitem{blass}
Andreas Blass.
\newblock Combinatorial cardinal characteristics of the continuum.
\newblock In {\em Handbook of set theory. {V}ols. 1, 2, 3}, pages 395--489.
  Springer, Dordrecht, 2010.

\bibitem{ehr}
P.~Erd\H{o}s, A.~Hajnal, and R.~Rado.
\newblock Partition relations for cardinal numbers.
\newblock {\em Acta Math. Acad. Sci. Hungar.}, 16:93--196, 1965.

\bibitem{galvin_hajnal}
Fred Galvin and Andr\'{a}s Hajnal.
\newblock Inequalities for cardinal powers.
\newblock {\em Ann. of Math. (2)}, 101:491--498, 1975.

\bibitem{garti_shelah_ultrafilter}
S.~Garti and S.~Shelah.
\newblock The ultrafilter number for singular cardinals.
\newblock {\em Acta Math. Hungar.}, 137(4):296--301, 2012.

\bibitem{garti_shelah_polarized}
Shimon Garti and Saharon Shelah.
\newblock A strong polarized relation.
\newblock {\em J. Symbolic Logic}, 77(3):766--776, 2012.

\bibitem{garti_shelah_generalized}
Shimon Garti and Saharon Shelah.
\newblock Remarks on generalized ultrafilter, dominating and reaping numbers.
\newblock {\em Fund. Math.}, 250(2):101--115, 2020.

\bibitem{golshani_mohammadpour}
Mohammad Golshani and Rahman Mohammadpour.
\newblock The tree property at double successors of singular cardinals of
  uncountable cofinality.
\newblock {\em Ann. Pure Appl. Logic}, 169(2):164--175, 2018.

\bibitem{honzik_stejskalova}
Radek Honzik and {\v{S}}{\'a}rka Stejskalov{\'a}.
\newblock Small $\mathfrak{u}(\kappa)$ at singular $\kappa$ with compactness at
  $\kappa^{++}$.
\newblock {\em Arch. Math. Logic}, pages 1--22, 2021.

\bibitem{jech_variation}
Thomas Jech.
\newblock A variation on a theorem of {G}alvin and {H}ajnal.
\newblock {\em Bull. London Math. Soc.}, 25(2):97--103, 1993.

\bibitem{jech}
Thomas Jech.
\newblock {\em Set theory}.
\newblock Springer Monographs in Mathematics. Springer-Verlag, Berlin, 2003.
\newblock The third millennium edition, revised and expanded.

\bibitem{kojman_density}
Menachem Kojman.
\newblock On the arithmetic of density.
\newblock {\em Topology Appl.}, 213:145--153, 2016.

\bibitem{kojman_kubis_shelah}
Menachem Kojman, Wies{\l}aw Kubi\'{s}, and Saharon Shelah.
\newblock On two problems of {E}rd{\H o}s and {H}echler: new methods in
  singular madness.
\newblock {\em Proc. Amer. Math. Soc.}, 132(11):3357--3365, 2004.

\bibitem{krueger_schimmerling}
John Krueger and Ernest Schimmerling.
\newblock An equiconsistency result on partial squares.
\newblock {\em J. Math. Log.}, 11(1):29--59, 2011.

\bibitem{laver}
Richard Laver.
\newblock Making the supercompactness of {$\kappa $} indestructible under
  {$\kappa $}-directed closed forcing.
\newblock {\em Israel J. Math.}, 29(4):385--388, 1978.

\bibitem{matet_meeting_numbers}
Pierre Matet.
\newblock Meeting numbers and pseudopowers.
\newblock {\em MLQ Math. Log. Q.}, 67(1):59--76, 2021.

\bibitem{matet_towers_and_clubs}
Pierre Matet.
\newblock Towers and clubs.
\newblock {\em Arch. Math. Logic}, 60(6):683--719, 2021.

\bibitem{rinot_milner_sauer}
Assaf Rinot.
\newblock On the consistency strength of the {M}ilner-{S}auer conjecture.
\newblock {\em Ann. Pure Appl. Logic}, 140(1-3):110--119, 2006.

\bibitem{shelah_d}
S.~Shelah.
\newblock On {$\mathfrak d_\mu$} for {$\mu$} singular.
\newblock {\em Acta Math. Hungar.}, 161(1):245--256, 2020.

\bibitem{cardinal_arithmetic}
Saharon Shelah.
\newblock {\em Cardinal arithmetic}, volume~29 of {\em Oxford Logic Guides}.
\newblock The Clarendon Press, Oxford University Press, New York, 1994.
\newblock Oxford Science Publications.

\bibitem{silver}
Jack Silver.
\newblock On the singular cardinals problem.
\newblock In {\em Proceedings of the {I}nternational {C}ongress of
  {M}athematicians ({V}ancouver, {B}. {C}., 1974), {V}ol. 1}, pages 265--268,
  1975.

\bibitem{specker}
E.~Specker.
\newblock Sur un probl\`eme de {S}ikorski.
\newblock {\em Colloq. Math.}, 2:9--12, 1949.

\end{thebibliography}

\end{document}